\documentclass[12pt]{article}
\usepackage{amsmath,latexsym, amsthm, amsfonts, amssymb, amsxtra,amscd, enumerate,color, cancel}
\usepackage{bbm, dsfont,latexsym} 
\usepackage{float,wrapfig}
\usepackage[english]{babel}
\usepackage[usenames,dvipsnames,svgnames,table]{xcolor}
\usepackage{times, graphicx,stmaryrd}  
\RequirePackage[colorlinks,citecolor=blue,urlcolor=blue]{hyperref}
\usepackage[active]{srcltx}

\theoremstyle{plain}
\newtheorem{thm}{Theorem}[section]
\newtheorem{lemma}[thm]{Lemma}
\newtheorem{proposition}[thm]{Proposition}

\newtheorem{cor}[thm]{Corollary}

\theoremstyle{definition}

\newtheorem{remark}[thm]{Remark}

\allowdisplaybreaks
\newcommand{\less}{\hspace{-5em}}

\newcommand{\equa}{\begin{eqnarray*}}
\newcommand{\tion}{\end{eqnarray*}}
\newcommand{\equal}{\begin{eqnarray}}
\newcommand{\tionl}{\end{eqnarray}}

\newcommand{\R}{\mathbb{R}}
\newcommand{\N}{\mathbb{N}}
\newcommand{\E}{\mathbb{E}}
\newcommand{\PP}{\mathbb{P}}
\newcommand{\T}{\mathcal{T}}
\newcommand{\one}{\mathds{1}}
\newcommand{\half}{\frac{1}{2}}
\newcommand{\vare}{\varepsilon}
\newcommand{\om}{\omega}

\newcommand{\abs}[1]{\left|{#1} \right|}
\newcommand{\dist}{\textrm{dist}}
\newcommand{\set}[1]{\left \{#1 \right \}}
\newcommand{\Lap}{\mathcal{L}}
\newcommand{\pr}[1]{\left( #1 \right)}
\newcommand{\pl}[1]{\left[ #1 \right]}


\makeatletter
\def\timenow{\@tempcnta\time
\@tempcntb\@tempcnta
\divide\@tempcntb60
\ifnum10>\@tempcntb0\fi\number\@tempcntb
:\multiply\@tempcntb60
\advance\@tempcnta-\@tempcntb
\ifnum10>\@tempcnta0\fi\number\@tempcnta}
\makeatother




\title{On first exit times and their means for Brownian bridges}
\author{Christel Geiss$^1$   \hspace{0.7em}   Antti Luoto$^2$ \hspace{0.7em} Paavo Salminen$^3$\\ 
\today }
\date{}
\begin{document}

\maketitle
\begin{abstract}  For a Brownian bridge from $0$ to $y$  
we prove 
that the mean of the first exit time from interval $(-h,h), \,\, h>0,$ behaves as $O(h^2)$ when $h \downarrow 0.$
Similar behavior is seen to hold also for  the 3-dimensional  Bessel bridge. For Brownian bridge  and 
3-dimen\-sional  Bessel bridge  this mean of the first exit time has a puzzling  representation in terms of the Kolmogorov distribution. 
The result regarding the Brownian bridge  is  applied to prove in detail an estimate 
needed by Walsh to determine the convergence of the binomial tree scheme for European options. 
 \end{abstract}

\vspace{1em} 
{\noindent \textit{Keywords:}  Brownian motion, Brownian bridge,  Bes\-sel process, Bessel bridge,  first exit time, last exit time, Kolmogorov distribution 
function,  binomial tree scheme \\
\noindent AMS   60J65, 60J60; 91G60  
}
{\noindent
\footnotetext[1]{Department of Mathematics and Statistics, University of Jyv\"askyl\"a, Finland. \\ \hspace*{1.5em}
 {\tt christel.geiss{\rm@}jyu.fi}}
\footnotetext[2]{Department of Mathematics and Statistics, University of Jyv\"askyl\"a, Finland. \\ \hspace*{1.5em}  {\tt antti.k.luoto{\rm@}student.jyu.fi}}
\footnotetext[3]{Faculty of Science and Engineering, \r{A}bo Akademi University,  Finland. \\ \hspace*{1.5em}  {\tt paavo.salminen{\rm@}abo.fi}}


\section{Introduction}

We start with the description of our setting. Let $C[0,\infty)$ denote  the space of continuous functions  $\omega: [0,\infty)\mapsto \R,$ and 
$$  \mathcal{C}_t := \sigma\{ \om(s): s \le t\}$$
the smallest 
$\sigma$-algebra making  the co-ordinate mappings 
up to time $t$  measurable. Furthermore,  let
$ \mathcal{C}$ be  the smallest 
$\sigma$-algebra containing all   $\mathcal{C}_t,\, t\geq 0.$ For an interval $I\subset \R$  let $(\PP^X_x)_{x\in I}$ be a family of probability measures defined 
in the filtered canonical space  $(C[0,\infty),  \mathcal{C}, (\mathcal{C}_t)_{t\geq 0}) $ such that  under $\PP^X_x$ for a given $x\in I$  the co-ordinate process 
$X=(X_t)_{t \ge 0}:=(\omega_t)_{t \ge 0}$ is a regular diffusion  taking values in $I$ and starting from $x.$  Here, $X$ is considered in the  sense of It\^o and 
McKean \cite{ItoMcKean}, see also \cite{BorodSalm}.   A crucial property of $X$  is that there exists a (speed) measure $m^X$ such that the transition probability 
has a continuous strictly positive density $(t,x,y)\mapsto q_t(x,y),\, t>0, x,y\in I$ with respect to $m^X$ i.e.,
\begin{equation}
\label{de1}
\PP^X_x(X_t\in dy)= q_t(x,y) m^X(dy),
\end{equation}   
see  \cite[page 149 and 157]{ItoMcKean}.

For $(X_t)_{t\geq 0},\, X_0=x,$ as defined above and $T>0$, one can construct a new non-homogeneous strong Markov process by conditioning $X$ to be at a given point 
$y\in I$ at time $T.$  Although the conditioning is, in general, with respect to a zero set $\{X_T=y\}$, it can be realized using the Bayes formula and the notion 
of regular conditional distributions. Another approach is to apply the theory of the Doob $h$-transforms.  To  explain this briefly, consider $X$ in space-time i.e., 
the process $\bar X=((X_t,t))_{t\geq 0}.$  Introduce for $z\in I$ and $t<T$ the function 
$$
h(z,t):=h(z,t;y,T):= q_{T-t}(z,y).
  $$
By the Chapman-Kolmogorov equation it holds for $x\in I$ and $s<t$ 
\begin{eqnarray*}
&&\E^X_{(x,s)}[h(X_t,t)]=\int_I q_{t-s}(x,z)h(z,t)\,m(dz)\cr
&&\hskip2.7cm=
\int_Iq_{t-s}(x,z)q_{T-t}(z,y)\,m(dz)\cr 
&&\hskip2.7cm=q_{T-s}(x,y)\cr
&&\hskip2.7cm=h(x,s),
\end{eqnarray*}
where $\E^X_{(x,s)}$ refers to the expectation associated with the space-time process $\bar X$ initiated from $x$ at time $s.$ Consequently, we may define 
for $f\in{\cal B}_b(I)$(= bounded measurable functions on $I$) and $s<t<T$ a non-homogeneous Markov semigroup $P^{X,h}_{t,s},\, 0<s<t<T,$ via
\begin{equation}
\label{sg}
P^{X,h}_{t,s}f(x):=\E^X_x\Big[f(X_{t-s})\frac{h(X_{t-s},t)}{h(x,s)}\Big].
\end{equation}
The process governed by the probability measure induced by this semigroup is called the $X$-bridge to $y$ of length $T.$ The notations 
$X^{x,T,y},\ \PP^X_{x,T,y},$ and $\E^X_{x,T,y}$ are used for this process, its probability measure and the expectation, respectively,  when 
the initial state is $x\in I.$ From (\ref{sg}) one may deduce the following absolute continuity relation for $A_t\in \mathcal{C}_t,\ t<T,$
\begin{eqnarray*}
&&
\PP^X_{x,T,y}\big(A_t\big) =\E^X_x\Big[\frac{h(X_{t},t)}{h(x,0)}\,;\, A_t\Big] 
=\E^X_x\Big[\frac{q_{T-t}(X_{t},y)}{q_{T}(x,y)}\,;\, A_t\Big].
\end{eqnarray*}
We refer to Chung and Walsh \cite{ChungWalsh} for a general discussion on $h$-transforms, to  Fitzsimmons et al. \cite{Fitzsimmons}, in particular, 
Proposition 1,  for general Markovian bridges, and to Salminen \cite{Salm} for some properties of diffusion bridges. 

Our main interest in this paper is focused on the case where the underlying process is a Brownian motion. The notations $(W_t)_ {t\geq 0}$ and 
$(B^{x,T,y}_t)_{0\leq t <T}$ are used for standard Brownian motion and Brownian bridge from $x$ to $y$ of length $T,$ respectively, and, for notational 
simplicity,  the corresponding probability measures are  $\PP_x$ and    $\PP_{x,T,y}$ with the expectations $\E_x$ and    $\E_{x,T,y},$ respectively. 
Brownian bridge has in addition to the general $h$-transform approach  a few equivalent specific characterizations. Indeed, Brownian bridge  can be 
viewed as (i) a Gaussian process, (ii) a deterministic time change with an additional drift of standard Brownian motion or (iii) as a solution of a 
SDE, see e.g. \cite{BorodSalm} p. 66. In Section \ref{application} we apply, in particular,  (ii) and (iii). The second one states that 
\equal \label{bridge-representation}
B_t^{x,T,y}\stackrel{d}{=}  \left\{ \begin{array}{ll}
(1-\tfrac{t}{T})W\pr{\tfrac{Tt}{T-t}} + x + \frac{(y-x)t}{T}, & t < T\\
\ y & t = T,\\
\end{array} \right.
\tionl
where $\stackrel{d}{=} $ means that the processes on the left and the right hand side are identical in law. The third one says that  
\begin{align}\label{bridge-representation-II}
 B^{x,T,y}_t \stackrel{d}{=}   \left\{ \begin{array}{ll}
 (T-t)\int_0^t \frac{dW_s}{T-s}  + x +\frac{(y-x)t}{T},  & t < T\\ 
\ y & t = T.\\
\end{array} \right.
\end{align}
Here it is assumed that the canonical  filtration $({ \mathcal{C}}_t)_{t\ge0}$ is augmented with the null sets of $ \mathcal{C}$ with respect to $\PP_0$ 
in order to have the usual conditions satisfied.

For a general diffusion bridge, we present  an integral representation of the mean of the first exit time from an interval the aim being to deduce the limiting 
behavior of the mean,  when the length of the  interval around the starting value of the bridge decreases to $0.$ The main body of the paper concerns  Brownian 
bridge and Bessel bridge  (with dimension parameter 3).
For Brownian bridge (a similar result holds for Bessel bridge),  it is shown in Theorem \ref{brown-and-bessel} that
\equa 
\lim_{h \downarrow 0}  \frac{\E_{0,T,y}[\T_{(-h,h)}]}{h^2} =1, \quad y\neq 0,
\tion
where 
$$
\T_{(-h,h)}:= \inf \{ t > 0:  B^{0,T,y}_t \not \in (-h,h) \}
$$ 
denotes the first exit time from the interval $(-h,h),\, h>0.$  To avoid ambiguity, in some cases we indicate the process for which the first exit time is considered by 
writing for any continuous process $(X_t)_{t\geq 0}$ 
\[
 \T_{(a,b)}^X := \inf \{ t > 0:  X_t \not \in (a,b) \}, \quad  a < x < b,
\]
and put $\T_{(a,b)}^X :=\infty$ if $\{ t > 0:  X_t \not \in (a,b) \}= \emptyset.$
Recall  that for Brownian motion (this can be deduced, e.g., from the Laplace transform of $\T_{(a,b)}$ given in \cite[Part II, Section 1, 3.0.1]{BorodSalm}) it holds  
\equal \label{two-sided-Brownian}  
 \E_x \pl{\T_{(a,b)}}  = (b-x)(x-a),\quad a<x<b.
 \tionl
Consequently, 
\equa 
\lim_{h \downarrow 0}  \frac{\E_{0,T,y}[\T_{(-h,h)}]}{ \E_{0}[\T_{(-h,h)}]} =1,\quad y\neq 0.
\tion
For some other diffusion bridges a similar asymptotic behavior can be found. But we have not been able to prove such a result in generality.

\bigskip

 The paper is organized as follows. Section \ref{4} contains the main results.  In Subsection \ref{21}  integral representations are given for the 
mean of $\T_{(a,b)}^X$ when $X$ is, firstly, a general regular diffusion and, secondly, a corresponding diffusion bridge. We also calculate 
for a regular diffusion  $X,\ X_0=x,$  the limiting behavior of  the mean of  $\T_{(x-h,x+h)}^X$ as $h\to 0.$ In Subsection \ref{22} we focus on Brownian bridge 
and 3-dimensional Bessel bridge starting from $x>0$ and find the limiting behavior of  the mean of $\T_{(x-h,x+h)}$ as $h\to 0.$     
For the means of the first exit times -- considered in Subsection \ref{22} -- Subsection \ref{23}   provides   puzzling representations in terms of the 
Kolmogorov distribution function. We discuss also other properties of the Kolmogorov distribution, in particular, its connection with the last exit time 
distribution of Brownian motion. 
As an application, in Section \ref{application}, we use our results concerning Brownian bridge to give a detailed  proof of an estimate in  Walsh  \cite{Walsh} 
needed therein when  deriving  the convergence rate of an option price calculated from the binomial 
 tree scheme to the Black-Scholes price. The estimate is used also in a forthcoming paper \cite{Luoto}.  


\section{Main results} \label{4}

\subsection{Preliminaries}  \label{21}
Let $X=(X_t)_{t\geq 0}$ be a regular diffusion taking values on an interval $I,$ and recall from (\ref{de1}) the notation $q_t(x,y)$ for its transition density 
with respect to the speed measure $m^X.$  For $a<b,\, a,b\in I,$ let $\widehat X$ denote $X$  killed at $\T_{(a,b)}.$ Then $\widehat X$ is a regular diffusion on 
$(a,b).$ The speed measure of $\widehat X$ is $m^X,$ and $\widehat X$ has  a continuous strictly positive transition density $\widehat q_t(x,y)$ such that for $x,y\in(a,b)$
\begin{align} \label{speed-measure-relation}  
   {\PP}^{\widehat X}_x( \widehat X_t \in dy) &= \widehat q_t(x,y) m^X(dy) \notag\\
   &= {\PP}^{X}_x( X_t \in dy, \T_{(a,b)}>t) \notag  \\
   &= {\PP}^{X}_x(a < \inf_{0 \leq s \leq t} X_s, \sup_{0 \leq s \leq t}X_s < b, X_t \in dy).
\end{align} 

\noindent This yields immediately the following result.
\begin{proposition} 
In case  $\PP^X_{x}(\T_{(a,b)}<\infty)=1,$ i.e., $X$ does not die inside $(a,b),$ it holds
\begin{align}\label{representation-distributionII}
\E^X_{x}[\T_{(a,b)}] 
& = \int_{0}^\infty dt \int_{a}^{b} m^X(dz)\,  \widehat q_t(x,z). 
\end{align}
\end{proposition}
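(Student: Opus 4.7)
The plan is to invoke the standard tail-integral representation of the expectation of a non-negative random variable, and then rewrite the resulting survival probability as an integral over the killed density using (\ref{speed-measure-relation}).

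First, since $\T_{(a,b)}\geq 0$ and (by hypothesis) almost surely finite under $\PP^X_x$, one has
\[
\E^X_x[\T_{(a,b)}] = \int_0^\infty \PP^X_x(\T_{(a,b)} > t)\, dt,
\]
with both sides possibly equal to $+\infty$. The next step is to identify the survival probability at time $t$ with the mass of the killed semigroup. By continuity of $X$ and the definition of $\T_{(a,b)}$, the event $\{\T_{(a,b)} > t\}$ is contained in $\{X_t\in(a,b)\}$ (if $X_t$ hit the boundary $\{a,b\}$, then $\T_{(a,b)}\leq t$), so
\[
\PP^X_x(\T_{(a,b)}>t) = \PP^X_x\bigl(\T_{(a,b)}>t,\ X_t\in (a,b)\bigr) = \int_a^b \PP^X_x\bigl(X_t\in dz,\ \T_{(a,b)}>t\bigr).
\]
Now relation (\ref{speed-measure-relation}) identifies the integrand as $\widehat q_t(x,z)\, m^X(dz)$, giving
\[
\PP^X_x(\T_{(a,b)}>t) = \int_a^b \widehat q_t(x,z)\, m^X(dz).
\]

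Substituting this back and applying Tonelli's theorem (the integrand $\widehat q_t(x,z)$ is non-negative and jointly measurable in $(t,z)$) yields
\[
\E^X_x[\T_{(a,b)}] = \int_0^\infty \!\int_a^b \widehat q_t(x,z)\, m^X(dz)\, dt,
\]
which is exactly (\ref{representation-distributionII}).

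There is no real obstacle here; the only subtle point worth verifying carefully is the inclusion $\{\T_{(a,b)}>t\}\subseteq\{X_t\in(a,b)\}$ used above, which requires continuity of paths and the fact that $\T_{(a,b)}$ is defined via the open interval $(a,b)$. Everything else is the tail formula plus Tonelli. The hypothesis $\PP^X_x(\T_{(a,b)}<\infty)=1$ simply ensures that $\T_{(a,b)}$ is almost surely a finite, non-negative random variable so that the tail formula is applicable without ambiguity; the identity then holds in $[0,+\infty]$.
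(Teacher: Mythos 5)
Your argument is correct and is exactly the derivation the paper has in mind: the authors state the proposition follows ``immediately'' from (\ref{speed-measure-relation}), and your write-up (tail formula for the non-negative random variable $\T_{(a,b)}$, identification of $\PP^X_x(\T_{(a,b)}>t)$ with $\int_a^b \widehat q_t(x,z)\,m^X(dz)$ via (\ref{speed-measure-relation}), then Tonelli) is precisely that omitted immediate step. No gaps.
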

\noindent The next proposition will serve as an important tool  for the calculations below. 
\begin{proposition} 
For  $x \in (a,b) \subset I$ and $y \in I \backslash (a,b)$  it holds 
\begin{align}\label{representation-distribution}
\E^X_{x,T,y}[\T_{(a,b)}] 
& = \int_{0}^T dt \int_{a}^{b} m^X(dz)\,  \widehat q_t(x,z)  \frac{q_{T-t}(z,y)}{q_T(x,y)}.
\end{align}
\end{proposition}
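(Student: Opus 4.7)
The plan is to use the tail formula for the expectation of a nonnegative random variable, combined with the bridge absolute continuity relation and the identification of the killed transition density from (\ref{speed-measure-relation}).

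First I would observe that because $x \in (a,b)$, $y \notin (a,b)$, and the paths of $X$ under $\PP^X_{x,T,y}$ are continuous with $X_T = y$, one has $\T_{(a,b)} \le T$ almost surely under $\PP^X_{x,T,y}$. Consequently
\begin{eqnarray*}
\E^X_{x,T,y}[\T_{(a,b)}] = \int_0^T \PP^X_{x,T,y}\bigl(\T_{(a,b)} > t\bigr)\, dt,
\end{eqnarray*}
reducing the problem to computing the survival probability $\PP^X_{x,T,y}(\T_{(a,b)} > t)$ for $t < T$.

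Next, since $\{\T_{(a,b)} > t\} \in \mathcal{C}_t$, I would apply the absolute continuity relation for the bridge displayed in the introduction,
\begin{eqnarray*}
\PP^X_{x,T,y}\bigl(\T_{(a,b)} > t\bigr)=\E^X_x\!\left[\frac{q_{T-t}(X_t,y)}{q_T(x,y)}\,;\, \T_{(a,b)} > t\right].
\end{eqnarray*}
The event $\{\T_{(a,b)} > t\}$ coincides with $\{a < \inf_{0\le s\le t} X_s,\ \sup_{0\le s\le t}X_s < b\}$, so by (\ref{speed-measure-relation}) the law of $X_t$ on this event is precisely $\widehat q_t(x,z)\,m^X(dz)$ restricted to $z \in (a,b)$. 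This gives
\begin{eqnarray*}
\PP^X_{x,T,y}\bigl(\T_{(a,b)} > t\bigr)
=\int_a^b \widehat q_t(x,z)\,\frac{q_{T-t}(z,y)}{q_T(x,y)}\,m^X(dz).
\end{eqnarray*}

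Finally, substituting back and applying Tonelli's theorem (which is justified since the integrand is nonnegative) to exchange the $dt$-integral with the $m^X(dz)$-integral yields the claimed representation (\ref{representation-distribution}). The only point that requires a little care is justifying the bridge identity on the event $\{\T_{(a,b)} > t\}$; this is what the main obstacle would be, and it is handled by the fact that the event lies in $\mathcal{C}_t$ so that the $h$-transform formula from (\ref{sg}) applies directly. The finiteness of $\T_{(a,b)}$ under the bridge measure, noted at the start, then allows the replacement of the upper integration limit $\infty$ by $T$.
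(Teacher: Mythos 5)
Your proposal is correct and follows essentially the same route as the paper: the tail formula for $\E^X_{x,T,y}[\T_{(a,b)}]$ over $[0,T]$, the $h$-transform absolute continuity relation applied to the $\mathcal{C}_t$-measurable event $\{\T_{(a,b)}>t\}$, and the identification of the resulting sub-probability law with $\widehat q_t(x,z)\,m^X(dz)$ via (\ref{speed-measure-relation}). The only cosmetic difference is that the paper writes the survival probability directly as $\int_a^b \PP^X_{x,T,y}(\T_{(a,b)}>t,\,X_t^{x,T,y}\in dz)$ before applying the transform, whereas you integrate out $z$ afterwards.
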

\begin{proof} Since  $x\in (a,b)$ and $y \in I \backslash (a,b)$ we have $\PP^X_{x,T,y}(\T_{(a,b)}<T)=1.$ Consider
\begin{align*}
\E^X_{x,T,y}[\T_{(a,b)}] & = \int_{0}^T dt \int_{a}^{b}\PP^X_{x,T,y}(\T_{(a,b)}>t,\, X^{x,T,y}_t\in dz)\\
&=
\int_{0}^T dt \int_{a}^{b} {\PP}^X_{x}(\T_{(a,b)} > t,\, X_t\in dz)  \frac{q_{T-t}(z,y)}{q_T(x,y)}\nonumber\\
& = \int_{0}^T \int_{a}^{b} {{\PP}}^X_x(a < \inf_{0 \leq s \leq t} X_s,\, \sup_{0 \leq s \leq t}X_s < b, X_t \in dz) \frac{q_{T-t}(z,y)}{q_T(x,y)} dt,
\end{align*}
 where the Markov property and formula (\ref{sg}) are used.   

\end{proof}
One of our main issues concerns  the limiting behavior of  the mean of  $\T_{\!(x-h,x+h\!)}^X$ as $h\to 0$ for diffusion bridges.  For regular diffusions we have the following  fairly complete characterization. 
\begin{proposition}
Assume that the differential operator associated with the regular diffusion $X$ is given by
\begin{align*}
\mathcal{G} u(x):= \frac{1}{2}a^2(x) u^{\prime\prime}(x) + b(x) u^{\prime}(x), \quad \quad x \in I,
\end{align*}
where $x \mapsto a^2(x) > 0$ and $x \mapsto b(x)$ are continuous in $I$. Let $x_o \in \text{Int}(I)$. Then
\begin{align*}
\lim_{h \downarrow 0} \frac{\E_{x_o}^{X}[\mathcal{T}_{(x_o-h,x_o+h)}]}{h^2} = a^{-2}(x_o).
\end{align*}
\end{proposition}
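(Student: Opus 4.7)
The plan is to apply Dynkin's formula to the simple test function $f(x) := (x-x_o)^2$, whose action under the generator is
$$\mathcal{G}f(x) = a^2(x) + 2b(x)(x-x_o),$$
so that $\mathcal{G}f(x_o) = a^2(x_o)$ and, by the assumed continuity of $a^2$ and $b$ at $x_o$, $\mathcal{G}f(x) \to a^2(x_o)$ as $x \to x_o$. The $h^2$ scaling will then emerge from the identity $(X_{\mathcal{T}_h} - x_o)^2 = h^2$ (almost surely, by path continuity and the definition of the exit time), while Dynkin's formula converts the right-hand side of this identity into $a^2(x_o)\,\E^X_{x_o}[\mathcal{T}_h]$ plus a lower-order correction, where I write $\mathcal{T}_h := \mathcal{T}^X_{(x_o-h,\,x_o+h)}$.

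The first step is to verify that $\E^X_{x_o}[\mathcal{T}_h] < \infty$ for $h$ small enough that $[x_o - h, x_o + h] \subset \text{Int}(I)$. This is a classical fact for regular diffusions: since $a^2$ is bounded below by a positive constant and $b$ is bounded on the compact interval, the exit time has finite expectation, which one can read off from the scale-and-speed representation
$$\E^X_{x_o}[\mathcal{T}^X_{(a,b)}] = \int_a^b G_{(a,b)}(x_o, y)\, m^X(dy),$$
or deduce by comparison with the exit time of a suitably rescaled Brownian motion.

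With finiteness secured, Dynkin's formula yields
$$\E^X_{x_o}[f(X_{\mathcal{T}_h})] = f(x_o) + \E^X_{x_o}\!\!\left[\int_0^{\mathcal{T}_h} \mathcal{G}f(X_s)\, ds\right].$$
The left-hand side equals $h^2$ and $f(x_o) = 0$. Setting $\epsilon_h := \sup_{|x - x_o| \le h}|\mathcal{G}f(x) - a^2(x_o)|$, continuity of $a^2$ and $b$ gives $\epsilon_h \to 0$ as $h \downarrow 0$. Since $|X_s - x_o| \le h$ for $s \le \mathcal{T}_h$, it follows that
$$\left|\,\E^X_{x_o}\!\!\left[\int_0^{\mathcal{T}_h} \mathcal{G}f(X_s)\, ds\right] - a^2(x_o)\,\E^X_{x_o}[\mathcal{T}_h]\,\right| \;\le\; \epsilon_h\, \E^X_{x_o}[\mathcal{T}_h].$$
Rearranging gives $h^2 = (a^2(x_o) + \delta_h)\,\E^X_{x_o}[\mathcal{T}_h]$ with $|\delta_h| \le \epsilon_h$, and dividing by $h^2$ and sending $h \downarrow 0$ produces the claimed limit $a^{-2}(x_o)$.

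The only delicate point is the a priori finiteness of $\E^X_{x_o}[\mathcal{T}_h]$ needed to legitimize Dynkin's formula; everything else is direct continuity of the coefficients at $x_o$ together with one application of the formula.
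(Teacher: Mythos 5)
Your proof is correct, but it takes a genuinely different route from the paper's. You apply Dynkin's formula to the test function $f(x)=(x-x_o)^2$, use $f(X_{\mathcal{T}_h})=h^2$ a.s.\ by path continuity, and control the error via the modulus of continuity of $\mathcal{G}f$ near $x_o$; the paper instead writes $\E^X_{x_o}[\mathcal{T}_{(x_o-h,x_o+h)}]=\int \widehat{G}_0(x_o,y)\,m^X(dy)$ using the explicit $0$-resolvent kernel of the killed diffusion expressed through the scale function $s$ and speed density $m^X$, and then extracts the limit by l'H\^opital's rule together with the identity $s'(x_o)m^X(x_o)=2a^{-2}(x_o)$. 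Your argument is more elementary and self-contained at the level of the generator: it needs only continuity of $a^2$ and $b$ at $x_o$ and one application of Dynkin's formula, and it makes transparent why the answer is exactly $a^{-2}(x_o)$ (the drift contributes only at order $h\cdot\E[\mathcal{T}_h]=O(h^3)$). Its one genuine prerequisite is the a priori finiteness of $\E^X_{x_o}[\mathcal{T}_h]$, which you correctly identify and which is indeed classical for a compact subinterval of $\mathrm{Int}(I)$ (and is in any case supplied by the very Green-function representation the paper uses). The paper's computation buys consistency with the rest of Section 2 -- it reuses the representation $\E^X_{x}[\mathcal{T}_{(a,b)}]=\int_0^\infty\!\!\int \widehat q_t\,dm\,dt$ already established there, gets finiteness for free, and keeps the scale/speed formalism in view for the bridge computations that follow -- at the cost of somewhat heavier explicit formulas. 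Both arguments are valid proofs of the stated limit.
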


\begin{proof}
Recall from \cite[Part I, Chapter II, No 7, p.17]{BorodSalm} that the speed measure $m^X$ and the scale function $s^X$ can be taken to be 
\begin{align}
m^{X}(dx)  = m^X(x) dx, \quad \frac{d}{d x}s^X(x)  = e^{-B(x)},  \label{speed_measure_and_scale_function_relations}
\end{align}
where 
\begin{align}
m^X(x)  = 2a^{-2}(x)e^{B(x)}, \quad B(x)  = \int^{x} 2a^{-2}(y) b(y) dy. \label{speed_density_and_B}
\end{align}
Consider now the process $X$ initiated at $x_o$ and killed when it leaves the interval $(x_o-h,x_o+h)$. We let $(\widehat{X}_t)_{t\geq0}$
denote this diffusion:
\begin{align*}
\widehat{X}_t = \left\{ \begin{array}{ll}
X_t, \quad & t < \mathcal{T}_{(x_o-h,x_o+h)},\\
\partial, \quad & t \geq  \mathcal{T}_{(x_o-h,x_o+h)},
\end{array}
\right.
\end{align*}
where $\partial$ is a cemetary point. It is well known (cf. \cite[Part I, Chapter II, No 11]{BorodSalm}) that the $0$-resolvent kernel of $\widehat{X}$ is given by 
\begin{align*}
\widehat{G}_0(x,y) \!:= \!\int_{0}^{\infty} \!\!\! \widehat{q}_t(x,y) dt = \left\{\begin{array}{ll}
\!\!\!\dfrac{(s(x)-s(x_o-h))(s(x_o+h)-s(y))}{s(x_o+h)-s(x_o-h)}, \,\, x < y,\\
&\\
\!\!\!\dfrac{(s(y)-s(x_o-h))(s(x_o+h)-s(x))}{s(x_o+h)-s(x_o-h)}, \,\, x > y,\\
\end{array}
\right.
\end{align*}
where $s := s^X$ and $\widehat{q}_t(x,y)$ is the transition density w.r.t. the speed measure $m^X$. Consequently, relation (\ref{representation-distributionII})
implies that
\begin{align}\label{mean_computation} 
\E^{X}_{x_o}[\mathcal{T}_{(x_o-h,x_o+h)}] 
   = & \int_0^{\infty} \left( \int_{x_o-h}^{x_o+h} \widehat{q}_t(x_o,y) m^X(dy) \right) dt\nonumber\\
   = & \int_{x_o-h}^{x_o+h} \widehat{G}_0(x_o,y) m^X(dy)\nonumber\\
   = & \,\frac{s(x_o{+}h)-s(x_o)}{s(x_o{+}h)-s(x_o{-}h)} \int_{x_o-h}^{x_o} \big(s(y) - s(x_o{-}h)\big)m^X(y) dy\nonumber\\
&  + \frac{s(x_o)-s(x_o{-}h)}{s(x_o{+}h)-s(x_o{-}h)} \int_{x_o}^{x_o+h} \big(s(x_o{+}h)-s(y)\big) m^X(y) dy.
\end{align}
Since $s$ is assumed to be continuously differentiable we have
\begin{align*}
\lim_{h \downarrow 0} \frac{s(x_o{+}h)-s(x_o)}{s(x_o{+}h)-s(x_o{-}h)} = \lim_{h \downarrow 0} \frac{s(x_o)-s(x_o{-}h)}{s(x_o{+}h)-s(x_o{-}h)} = \frac{1}{2},
\end{align*}
and l'Hospital's rule yields
\begin{align*}
& \lim_{h \downarrow 0} \frac{1}{h^2} \int_{x_o-h}^{x_o} (s(y) - s(x_o-h))m^X(y) dy\\
& = \lim_{h \downarrow 0} \frac{1}{2h} \int_{x_o-h}^{x_o} \left(-\frac{d}{dh} s(x_o-h) \right) m^X(y) dy\\
& = \lim_{h \downarrow 0} \left(-\frac{d}{dh} s(x_o-h) \right) \frac{1}{2h} \int_{x_o-h}^{x_o} m^X(y) dy\\
& = \frac{1}{2}s'(x_o)m^X(x_o)\\
& = a^{-2}(x_o),
\end{align*}
where the last equality follows from relations (\ref{speed_measure_and_scale_function_relations}) and (\ref{speed_density_and_B}). Similarly,
\begin{align*}
\lim_{h \downarrow 0} \frac{1}{h^2} \int_{x_o}^{x_o+h} \big(s(x_o+h)-s(y)\big)m^X(y) dy = a^{-2}(x_o).
\end{align*}
Hence, by (\ref{mean_computation}),
\begin{align*}
\lim_{h \downarrow 0} \frac{\E^X_{x_o}[\mathcal{T}_{(x_o-h,x_o+h)}]}{h^2} = a^{-2}(x_o).
\end{align*}
\end{proof}


\subsection{The mean of the first exit time for  Brownian bridge and 3-dimensional Bessel bridge} \label{22}

We introduce the following function
\equal \label{Delta}
\Delta(z,h,t):=\frac{1}{\sqrt{2\pi t}} \sum_{m=-\infty}^{\infty} \left (e^{-\frac{(z +4mh)^2}{2t}} - e^{-\frac{(z+2h(2m+1))^2}{2t}}  \right ), \,\,
|z| <h, \,t>0.
\tionl

\begin{thm}\label{brown-and-bessel} {\color{white} .} 
 \begin{enumerate}[(i)]
\item \label{bridge-mean}  For the Brownian bridge with $|y| \geq h,$
\begin{align}\label{mean-bridge-rep}
\E_{0,T,y}[\T_{(-h,h)}]  =  \int_{0}^T \int_{-h}^{h}  \frac{p_{T-t}(z,y)}{p_T(0,y)}\Delta(z,h,t) dz dt,
\end{align}
where $p_t(x,y)$ denotes the  transition density of the standard Brownian motion,
and,  moreover,
 \equal  \label{bridge-limit}
 \lim_{h\downarrow 0} h^{-2}\E_{0,T,y}[\T_{(-h,h)}] = 1, \quad y\neq 0.
 \tionl             
\item   \label{Bessel-mean} For the 3-dimensional Bessel bridge with  $x>h$ and  $y \notin (x-h,x+h),$
\equa
\E^{(3)}_{x,T,y}[\T_{(x-h,x+h)}]  
=  \int_{0}^T \int_{-h}^{h}   \frac{z+x}{x} \Delta(z,h,t)\frac{r^{(3)}_{T-t}(z+x,y)}{r^{(3)}_T(x,y)} dz dt,
\tion
where 
\equal \label{transition-Bessel3}
r^{(3)}_t(x,y) dy & = \frac{y}{x}\big(p_t(x,y) - p_t(x,-y) \big)dy, \quad x > 0, y>0 
\tionl
describes  the transition density of the 3-dimensional Bessel process, 
and, moreover, 
\equal \label{bessel-limit}
\lim_{h \downarrow 0} h^{-2} \E^{(3)}_{x,T,y}[\T_{(x-h,x+h)}]  = 1.
\tionl
\item  \label{infty-case}  
If $y \in (-h,h)$  (or $y \in (x-h,x+h)$ with $x>h,$ $y>0$)  the mean of the first exit time  is infinite for both bridges, i.e. 
\equal  \label{infinite-mean}
\E_{0,T,y}[\T_{(-h,h)}]  = \E^{(3)}_{x,T,y}[\T_{(x-h,x+h)}]  = \infty.
\tionl
\end{enumerate}
\end{thm}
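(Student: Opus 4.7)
My plan handles the three items in turn, with (\ref{representation-distribution}) serving as the common starting point.

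\textbf{Parts (\ref{bridge-mean}) and (\ref{Bessel-mean}), integral representations.} For (\ref{bridge-mean}) I would specialize (\ref{representation-distribution}) to standard Brownian motion, using $m^{W}(dz)=2\,dz$ and $q_{t}(x,y)=p_{t}(x,y)/2$, so that the ratio $q_{T-t}/q_{T}$ reduces to $p_{T-t}/p_{T}$, while $m^{W}(dz)\,\widehat q_{t}(0,z)=\widehat p_{t}(0,z)\,dz$, the Lebesgue density of Brownian motion absorbed at $\pm h$. The method of images identifies this density with the series $\Delta(z,h,t)$, producing (\ref{mean-bridge-rep}). For (\ref{Bessel-mean}) I use that BES(3) is the Doob $h$-transform of Brownian motion killed at $0$ with $h(z)=z$; since $x>h$ makes the barrier at $0$ invisible before exit from $(x-h,x+h)$, the absorbed Bessel density is $\widehat r^{(3)}_{t}(x,z)=(z/x)\Delta(z-x,h,t)$. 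Substituting this in (\ref{representation-distribution}) together with $m^{(3)}(dz)=2z^{2}\,dz$ and $q^{(3)}_{t}=r^{(3)}_{t}/(2z^{2})$, and translating $z\mapsto z+x$, yields the claimed formula.

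\textbf{The two limits.} For both (\ref{bridge-limit}) and (\ref{bessel-limit}) I would rescale $z=hu$, $t=h^{2}s$ in the respective integral representations. A direct computation from (\ref{Delta}) gives the scaling identity $\Delta(hu,h,h^{2}s)=h^{-1}\Delta(u,1,s)$, so for instance
\begin{align*}
\frac{\E_{0,T,y}[\T_{(-h,h)}]}{h^{2}}
=\int_{0}^{T/h^{2}}\!\!\int_{-1}^{1}\frac{p_{T-h^{2}s}(hu,y)}{p_{T}(0,y)}\,\Delta(u,1,s)\,du\,ds,
\end{align*}
and the analogous formula for BES(3) carries an additional factor $(z+x)/x=1+hu/x\to 1$. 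In both cases the integrand converges pointwise to $\Delta(u,1,s)$, whose double integral over $(-1,1)\times(0,\infty)$ equals $\E_{0}[\T_{(-1,1)}]=1$ by (\ref{representation-distributionII}) and (\ref{two-sided-Brownian}); for the Bessel limit the same value arises from the last proposition of Subsection \ref{21} applied with $a^{2}\equiv 1$. To pass to the limit I would estimate $|y-hu|\geq|y|-h\geq|y|/2$ for $h\leq |y|/2$ (respectively $|y-(hu+x)|$ bounded away from $0$ when $y\notin(x-h,x+h)$), so that the Gaussian factor absorbs the singular prefactor $(T-h^{2}s)^{-1/2}$ through the elementary bound $\tau^{-1/2}e^{-c/\tau}\leq C(c)$ for $\tau>0$, $c>0$, yielding a uniform majorant and dominated convergence.

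\textbf{Part (\ref{infty-case}) and main obstacle.} When $y$ lies strictly inside the target interval, I would observe that the bridge stays there on all of $[0,T]$ with strictly positive probability: for the Brownian bridge the absolute continuity derived from (\ref{sg}) gives $\PP_{0,T,y}(\T_{(-h,h)}>T)=\widehat p_{T}(0,y)/p_{T}(0,y)>0$ because the absorbed density is strictly positive in the interior, and the Bessel case is identical with $\widehat r^{(3)}_{T}(x,y)/r^{(3)}_{T}(x,y)>0$. On this event $\T_{(-h,h)}=\infty$ by the convention adopted in the introduction, so both expectations in (\ref{infinite-mean}) are infinite. The principal technical obstacle lies in the first two parts: since $(T-h^{2}s)^{-1/2}$ blows up as $s\uparrow T/h^{2}$, one must carefully trade this singularity against the Gaussian exponential, which is precisely where the standing hypotheses $y\notin(-h,h)$ and $y\notin(x-h,x+h)$ enter the proof.
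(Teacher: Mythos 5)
Your proposal is correct and follows essentially the same route as the paper: the representation via \eqref{representation-distribution} combined with the image-series formula for the killed transition density, the scaling $z=hu$, $t=h^2s$ with $\Delta(hu,h,h^2s)=h^{-1}\Delta(u,1,s)$ and dominated convergence (using that the Gaussian kernel evaluated away from the starting point stays bounded as $t\downarrow 0$), and for part (iii) the strict positivity of the killed density giving $\PP_{x,T,y}(\T\geq T)=\widehat q_T(x,y)/q_T(x,y)>0$. The only cosmetic differences are that you obtain the killed Bessel density via the $h$-transform with $h(z)=z$ rather than citing the handbook formula, and that you assert the absolute-continuity identity at $t=T$ directly, where the paper justifies it by a $\varepsilon\downarrow 0$ limit and l'Hospital's rule since \eqref{sg} is stated only for $t<T$.
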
  
 
\begin{proof} \eqref{bridge-mean} 
According to \eqref{speed-measure-relation}  and  \eqref{representation-distribution}, 
   \begin{align*}
\E_{0,T,y}[\T_{(-h,h)}] \! =\!\!  \int_{0}^T\!\! \int_{-h}^{h} \!\!\frac{p_{T-t}(z,y)}{p_T(0,y)} \PP_0( \inf_{0 \leq s \leq t} W_s>-h,
\sup_{0 \leq s \leq t}W_s < h, W_t \in dz)
dt.
 \end{align*}
By \cite[Part II, Section 1,  1.15.8 (p. 180)]{BorodSalm} and \eqref{Delta}, 
\equa 
&& \PP_0( \inf_{0 \leq s \leq t} W_s>-h, \sup_{0 \leq s \leq t}W_s < h, W_t \in dz) 
 =\Delta(z,h,t) dz.
\tion 
To show \eqref{bridge-limit},  substitute $z=hu$ and $t=h^2s,$ and notice that $\Delta(hu,h,h^2 s) h  = \Delta(u,1, s),$ so that 
\equa 
\E_{0,T,y}[\T_{(-h,h)}] & = & \int_{0}^T \int_{-h}^{h}  \frac{p_{T-t}(z,y)}{p_T(0,y)}\Delta(z,h,t) dz dt  \notag\\
&=& h^2 \int_{0}^{T/h^2} \int_{-1}^{1}  \frac{p_{T-h^2s}(hu,y)}{p_T(0,y)}\Delta(u,1, s) du ds.
\tion
To apply dominated convergence for  $h\downarrow 0$ let $y$ be fixed and assume that  $h <\frac{|y|}{2}.$ 
Notice that for $y\neq0$ there exists a constant $C(T,y)>0$ such that
\equal \label{p-bound}
\sup_{0<t<T} p_t(0,y) \le C(T,y).
\tionl
This implies that  
\[
 \sup_{u \in (-1,1), \,  s \in (0,T/h^2)}  \frac{p_{T-h^2s}(hu,y)}{p_T(0,y)} \le   \frac{ C(T,y/2)  }{p_T(0,y)}.
 \]
Therefore, since $(u,s) \mapsto \Delta(u,1, s)$ is integrable and  $(u,s,h) \mapsto \frac{p_{T-h^2s}(hu,y)}{p_T(0,y)}$ is bounded
on $(-1,1)\times(0,T/h^2) \times (0,|y|/2),$   dominated convergence  yields
\equal \label{bridge-mean-limit}
\lim_{h \downarrow 0} \frac{\E_{0,T,y}[\T_{(-h,h)}]}{h^2} &=& \int_{0}^{\infty} \int_{-1}^{1} \Delta(u,1, s) du ds  \notag\\
&=&\int_{0}^{\infty} \int_{-1}^{1}  \PP_0(\T_{(-1,1)} >s, W_s \in du) ds  \notag \\
&=& \E_{0} [\T_{(-1,1)}] \notag \\
&=& 1,
\tionl
where \eqref{two-sided-Brownian} is used for the last equality. \\
\eqref{Bessel-mean} By \eqref{speed-measure-relation} and \eqref{representation-distribution}, the expectation  $\E^{(3)}_{x,T,y}[\T_{(x{-}h,x{+}h)}]$  has the representation  
\begin{align*} 
 \int_{0}^T \int_{x-h}^{x+h} \PP^{(3)}_x(\sup_{0 \leq s \leq t}\big|X_s - x\big| < h, X_t \in dz)   \frac{r^{(3)}_{T-t}(z,y)}{r^{(3)}_T(x,y)} dt.
\end{align*}
According to \cite[Part II, Section 5, 1.15.8]{BorodSalm} we have for $z \in (x-h,x+h)$ that
\begin{align*}  
& \PP^{(3)}_x( \sup_{0 \leq s \leq t} \abs{X_s - x} < h, X_t \in dz) \nonumber\\
& = \frac{z}{x\sqrt{2\pi t}} \sum_{m=-\infty}^{\infty} \pr{\exp\pr{-\tfrac{((z-x)+2h(2m))^2}{2t}} - \exp\pr{-\tfrac{((z-x)+2h(2m+1))^2}{2t}}} dz \nonumber\\
& = \frac{z}{x} \Delta(z-x,h,t) dz,
\end{align*}
with $\Delta$ defined in \eqref{Delta}.
We substitute $ z-x= \alpha$ and get
\begin{align} \label{Besselmean1exit}
\E^{(3)}_{x,T,y}[\T_{(x{-}h,x{+}h)}]   & =  \int_{0}^T \int_{x-h}^{x+h}   \frac{z}{x} \Delta(z-x,h,t)   \frac{r^{(3)}_{T-t}(z,y)}{r^{(3)}_T(x,y)} dz dt \notag\\
&=  \int_{0}^T \int_{-h}^{h}   \frac{\alpha+x}{x} \Delta(\alpha,h,t)   \frac{r^{(3)}_{T-t}(\alpha+x,y)}{r^{(3)}_T(x,y)} d\alpha dt.
\end{align}
For the proof of \eqref{bessel-limit} we substitute $t=h^2 s$ and $\alpha=h\beta$ so that
\begin{align*}
\E^{(3)}_{x,T,y}[\T_{(x{-}h,x{+}h)}]   & =  h^3 \int_{0}^{T/h^2} \!\!\!\!\int_{-1}^{1}   \frac{h\beta+ x}{x} \Delta(h\beta,h,h^2 s)   
\frac{r^{(3)}_{T-h^2 s}(h\beta+ x,y)}{r^{(3)}_T(x,y)} d\beta ds.
\end{align*}
Using relation \eqref{transition-Bessel3} yields
\equa
\left | \frac{h\beta+ x}{x} \, \frac{r^{(3)}_{T-h^2 s}(h\beta+ x,y)}{r^{(3)}_T(x,y)} \right | 
&=&  \left |  \frac{p_{T-h^2 s}(h\beta+ x,y)-p_{T-h^2 s}(h\beta+ x,-y)}{p_T(x,y)-p_T(x,-y)}\right |. \tion
To see that this expression is bounded in $s\in (0, T/h^2)$ and $\beta\in (-1,1)$ notice that $y \notin (h-x,h+x),$ with $x>h$ and $y>0$ 
implies
\[0<(|y-x|-h)^2 \le(h\beta+x-y)^2 \le (h\beta+x+y)^2,
\] 
 so that  
\[\left | p_{T-h^2 s}(h\beta+ x,y)-p_{T-h^2 s}(h\beta+ x,-y)\right |\le   p_{T-h^2 s}(|y-x|,h).\]
For $h \le |y-x|/2$ we  infer from  \eqref{p-bound} that 
\[ \sup_{s \in (0, T/h^2)} p_{T-h^2 s}(|y-x|,h) \le C(T, |y-x|/2).\]
Recall that $\Delta(\beta,1, s)=\Delta(h\beta,h,h^2 s)h$,  so that  dominated convergence gives
\begin{align*}
\lim_{h \downarrow 0} \frac{\E^{(3)}_{x,T,y}[\T_{(x{-}h,x{+}h)}]}{h^2} & = \int_{0}^{\infty} \int_{-1}^{1}   \Delta(\beta,1, s)  d\beta ds =1,
\end{align*} 
where the last equality was shown in \eqref{bridge-mean-limit} . \\
\eqref{infty-case} 
 For a regular diffusion $X,$   \eqref{infinite-mean} follows from
 $\PP^X_{x,T,y}(\mathcal{T}_{(x-h,x+h)} \ge T)>0.$  It holds
 \equal \label{limit-fraction}
 \PP^X_{x,T,y}(\T_{(x-h,x+h)} \ge T) = \lim_{\vare \downarrow 0 } \frac{ \PP^X_x(\T_{(x-h,x+h)} \ge T, y \le X_T \le y+\vare)}{ \PP^X_x(y \le X_T \le y+\vare)}
 \tionl 
and 
\equa
\PP^X_x(\T_{(x-h,x+h)} \ge T, y \le X_T \le y+\vare) = \int_y^{y+\vare} \widehat q_T(x,z) dz,
\tion  
where  $\widehat q$ is the transition density of $X$ killed at $\T_{(x-h,x+h)}.$ By \cite[p.157]{ItoMcKean} the transition density of
any regular diffusion is strictly positive. In our case this means
$$\widehat q_T(x,z)> 0 \quad \text{  for all  } \quad |z|<h.  $$
Using l'Hospital's rule in  \eqref{limit-fraction} yields
\equa
  \PP^X_{x,T,y}(\T_{(x-h,x+h)} \ge T) =  \frac{\widehat q_T(x,y)}{q_T(x,y)}>0.
\tion
\end{proof}

\begin{remark}{\color{white} .} 
 For the 3-dimensional Bessel bridge  starting in $0$ with $y >h$  it holds
 \begin{align*}
\E^{(3)}_{0,T,y}[\T_{h}] & = \int_{0}^T \int_{0}^{h} \PP^{(3)}_0(\sup_{0 \leq s \leq t} X_s < h, X_t \in dz) 
\frac{r^{(3)}_{T-t}(z,y)}{r^{(3)}_T(0,y)}dt,
\end{align*}
where $\T_{h}^X := \inf \{ t > 0:  X_t =h \}$ denotes the first hitting time.
 The represetation follows from  \eqref{speed-measure-relation} and \eqref{representation-distribution}, which can be extended to $x=0$
 since $0$ is an entrance boundary point. Similarly as above it can be shown that 
 \equa 
 \lim_{h \downarrow 0} \frac{\E^{(3)}_{0,T,y}[\T_{h}]}{h^2} =  \lim_{h \downarrow 0} \frac{\E^{(3)}_{0}[\T_{h}]}{h^2}   =\frac{1}{3}.
 \tion   
 We leave the proof to the reader. 

\end{remark}
\subsection{The Kolmogorov distribution function and the  mean of the  first exit time} \label{23}
  A classical result due to Doob \cite{doob49} is that   the distribution of  the supremum of the absolute value of the standard 
 Brownian bridge is given by 
  \equa 
 \PP_{0,1,0} \left(\sup_{0 \leq s \leq 1} \abs{B_s^{0,1,0}} \le h \right)= \sum_{m=-\infty}^{\infty} (-1)^m e^{-2 m^2 h^2}=:F(h).
 \tion 
 The function $h\mapsto F(h),\, h>0,$ is called the Kolmogorov distribution function due 
 to Kolmogorov's fundamental work \cite{kolmogorov33} (and also Smirnov \cite{smirnov39})  on empirical distributions. We refer also 
 to \cite{PitmanYor99} and     \cite[Section 5.7]{Kroese}. The main result of this section -- Theorem \ref{BigO2} -- provides   representations  for 
 the  mean of the  first exit time of the 
 Brownian bridge  and  of  the 3-dimensional Bessel bridge 
 involving the Kolmogorov distribution function. 
  We present now some formulas related to the Kolmogorov distribution  which we need later.  The following Jacobi's theta function indentity, an instance 
  of the Poisson summation formula, is stated in  \cite[equation (2.1)]{BianePitmanYor}):
 \begin{align*} 
 \sum_{m=-\infty}^{\infty} \cos(2m\pi v) e^{-m^2 \pi^2 u} = \frac{1}{\sqrt{\pi u}} \sum_{m=-\infty}^{\infty} e^{-\frac{(m+v)^2}{u}}, \quad u>0,v \in \R.
\end{align*}
 Putting here $u=2x^2/\pi^2$ and $v =1/2$ yields 
\equal \label{poisson-summation} 
F(x)=\sum_{m=-\infty}^{\infty} (-1)^m e^{-2 m^2 x^2}= \frac{\sqrt{2\pi}}{x} \sum_{k=1}^\infty \exp\left (-\frac{(2k-1)^2\pi^2}{8 x^2}\right ).
\tionl 
Notice also that
 \equa
 F(h/\sqrt{t}) &=&     \PP_{0,1,0} \left(\sup_{0 \leq s \leq 1} \abs{B_s^{0,1 ,0}} \le h/\sqrt{t} \right)\\
&=& \PP_{0,t,0} \left(  \sup_{0 \leq s \leq 1} \abs{ B_{ts}^{0,t,0}} \le h \right ) \\
&=&\PP_{0,t,0}\left (  \sup_{0 \leq s \leq t} \abs{B_s^{0,t ,0}} \le h\right ), 
 \tion
where it is used that 
$$\big(\sqrt t B_s^{0,1,0}\big)_ {0\leq s\leq 1} \stackrel{d}{=}  \big(\ B_{st}^{0,t,0}\big)_ {0\leq s\leq 1},$$ 
which can be seen by  applying  the scaling property of Brownian motion to the representation \eqref{bridge-representation}. 
Consequently, 
 \equal \label{Kolmogorov-relationII}
F(h/\sqrt{t}) =   \PP_{0,t,0} ( \T_{(-h,h)}>t )=\PP_{0,t,0} ( \T_{(-h,h)}=\infty ).
 \tionl
\begin{thm}\label{BigO2} {\color{white} .} \label{representation-with-Kolm}
\begin{enumerate}[(i)]
\item \label{mean-y-ge-h} For  the Brownian bridge with  $\abs{y} \geq h,$ 
\begin{align}\label{another-rep}
\E_{0,T,y}[\T_{(-h,h)}]  &= h \int_0^T \frac{ p_{T-t}(0,y)}{p_{T}(0,y)} \frac{F(h/\sqrt{t})}{\sqrt{2\pi t}} dt.
\end{align}
\item   \label{puzzleBessel} For the 3-dimensional Bessel bridge with positive $y \notin (x-h,x+h)$ and $x>0,$ 
\equal \label{puzzleBessel-rep}
\E^{(3)}_{x,T,y}[\T_{(x-h,x+h)}]  
= h \int_{0}^T \frac{r^{(3)}_{T-t}(x,y)}{r^{(3)}_T(x,y)}\frac{F(h/\sqrt{t})}{\sqrt{2\pi t}} dt.
\tionl
\end{enumerate}
\end{thm}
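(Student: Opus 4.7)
The plan is to prove (i) by a Laplace transform in $T$, and then deduce (ii) from (i) by a change of variable and linearity. Starting from the representation \eqref{mean-bridge-rep}, multiplying both sides by $p_T(0,y)$, and noting that $\Delta(0,h,t) = F(h/\sqrt{t})/\sqrt{2\pi t}$ (which follows from setting $z=0$ in \eqref{Delta} and separating even/odd terms), part (i) is equivalent to
\begin{align*}
A(T,y) := \int_0^T\!\!\int_{-h}^h p_{T-t}(z,y)\Delta(z,h,t)dz\, dt = h \int_0^T p_{T-t}(0,y)\Delta(0,h,t)dt =: B(T,y).
\end{align*}
This identity need not hold pointwise in $t$; only after time-integration.

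Both $A(\cdot,y)$ and $B(\cdot,y)$ are convolutions in $T$, so I would apply the Laplace transform in $T$. With $\mu := \sqrt{2\lambda}$, the resolvent of standard Brownian motion is $\int_0^\infty e^{-\lambda t}p_t(z,y)dt = e^{-\mu|z-y|}/\mu$, and the Green function of Brownian motion killed upon exit from $(-h,h)$ is
\begin{align*}
\int_0^\infty e^{-\lambda t}\Delta(z,h,t)dt = \frac{\sinh(\mu(h-|z|))}{\mu\cosh(\mu h)}.
\end{align*}
For $y \geq h$ (the case $y \leq -h$ follows by $z \mapsto -z$), substituting $|z-y|=y-z$ for $z \in (-h,h)$ reduces the equality of Laplace transforms to
\begin{align*}
\int_{-h}^h e^{\mu z}\sinh(\mu(h-|z|))dz = h\sinh(\mu h),
\end{align*}
which is the main computational step. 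Uniqueness of the Laplace transform then gives $A \equiv B$, and dividing by $p_T(0,y)$ yields \eqref{another-rep}.

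For part (ii), I would rewrite the integrand in \eqref{Besselmean1exit} using \eqref{transition-Bessel3}: the factor $(z+x)/x$ combines with $y/(z+x)$ appearing in $r^{(3)}_{T-t}(z+x,y)$ to give $(y/x)[p_{T-t}(z+x,y) - p_{T-t}(z+x,-y)]$, and the prefactor $y/x$ cancels against the same factor in $r^{(3)}_T(x,y)$. After the substitution $w = z+x$, each of the two resulting terms is precisely of the form treated in (i), but for a bridge starting at $x$ instead of $0$; by translation invariance of Brownian motion, (i) applies both to the bridge from $x$ to $y$ (valid since $|y-x| \geq h$ by hypothesis) and to the bridge from $x$ to $-y$ (valid since $x+y > x > h$). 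Subtracting yields \eqref{puzzleBessel-rep}.

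The main obstacle is the key integral identity $\int_{-h}^h e^{\mu z}\sinh(\mu(h-|z|))dz = h\sinh(\mu h)$. The cancellation that makes it work is not visually obvious: splitting the integral at $0$ and expanding the hyperbolic sines as exponentials, the contributions of order $\sinh(\mu h)/\mu$ from the two halves cancel exactly, leaving only $h(e^{\mu h} - e^{-\mu h})/2$. A secondary routine point is to justify the exchange of the Laplace integral and the double integral via Fubini, which is immediate from the positivity of all integrands.
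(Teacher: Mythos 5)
Your proposal is correct and follows essentially the same route as the paper: Laplace-transform both sides in $T$, exploit the convolution structure, verify the transforms agree, and deduce (ii) from the Brownian identity by splitting $r^{(3)}$ into two Brownian densities via \eqref{transition-Bessel3}. The only (cosmetic) difference is that you use the closed-form resolvent $\int_0^\infty e^{-\lambda t}\Delta(z,h,t)\,dt=\sinh(\mu(h-|z|))/(\mu\cosh(\mu h))$ and the observation $\Delta(0,h,t)=F(h/\sqrt t)/\sqrt{2\pi t}$, which lets you bypass the paper's term-by-term summation of the theta series \eqref{general-Kolmogorov-relation} and its use of the Poisson summation formula \eqref{poisson-summation} together with the partial-fraction expansion of $\tanh$; your key integral identity $\int_{-h}^h e^{\mu z}\sinh(\mu(h-|z|))\,dz=h\sinh(\mu h)$ checks out.
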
  

\begin{proof}
\eqref{mean-y-ge-h} Since   $\E_{0,T,-y}[\T_{(-h,h)}] = \E_{0,T,y}[\T_{(-h,h)}],$  we may assume that $y \ge h.$ 
 We derive \eqref{another-rep} from \eqref{speed-measure-relation} and \eqref{representation-distribution} 
by showing that the Laplace transforms of the functions  
\equa 
T \mapsto \int_{0}^T \int_{-h}^h   p_{T-t}(x,y) \PP_{0,t,x} ( \T_{(-h,h)}>t )   p_t(0,x) dxdt
\tion 
and
\equal \label{function} 
T \mapsto h \int_{0}^T   p_{T-t}(0,y)  \frac{F(h/\sqrt{t})}{\sqrt{2\pi t}} dt
\tionl  coincide. In the following, we will denote the Laplace transform of a function $f$
by   $$\mathcal{L}_{t,\gamma}(f(t)) = \int_0^\infty e^{-\gamma  t} f(t) dt, \quad  \gamma >0.$$  Notice that  we also indicate the integration 
variable $t.$ 
Using Fubini's theorem and the  convolution formula,  we get
 \equal \label{the-conv-integral}
 && \less \Lap_{T,\gamma}\Big(\int_{0}^T \int_{-h}^h   p_{T-t}(x,y) \PP_{0,t,x} ( \T_{(-h,h)}>t )   p_t(0,x) dxdt\Big) \notag\\
 &=& \int_{-h}^h  \Lap_{t,\gamma}( p_{t}(x,y) )
   \Lap_{t,\gamma}(\PP_{0,t,x} ( \T_{(-h,h)}>t )   p_t(0,x) ) dx.
    \tionl  
To compute the second Laplace transform expression  of \eqref{the-conv-integral} we use the series representation 
\equal \label{general-Kolmogorov-relation}
 \PP_{0,t,x} ( \T_{(-h,h)}>t) =\sum_{m=-\infty}^\infty (-1)^m  e^{- \frac{2mh(mh-x)}{t}}, \quad |x|<h,
 \tionl
 (see  \cite[formula (4.12)]{Beghin} or \cite[formula (17)]{SalmYor}). 
For $x\in (-h,h)$ it holds that  
 \equa
\left | \sum_{|m| \ge 2} (-1)^m  e^{- \frac{2mh(mh-x)}{t}} \right | \le  \sum_{|m| \ge 2}  e^{- \frac{2mh(mh-x)}{t}} \le  2 \sum_{m \ge 2} \left( e^{- \frac{2h^2}{t}} \right)^m.
\tion
Hence one may  interchange the summation and the Laplace transform, and since 
\[\Lap_{t,\gamma}(p_{t}(x,z) ) =\frac{1}{\sqrt{2 \gamma}} e^{- \sqrt{2 \gamma}|z-x|},  \]         
one gets           
\equa
 && \less \Lap_{t,\gamma}(\PP_{0,t,x} ( \T_{(-h,h)}>t )   p_t(0,x) ) \\ 
 &=&  \Lap_{t,\gamma} 
  \left (\sum_{m=-\infty}^\infty (-1)^m  e^{- \frac{2mh(mh-x)}{t}} \frac{ e^{- \frac{x^2}{2t}}}{\sqrt{2 \pi t}} \right) \\
&=&    \sum_{m=-\infty}^\infty (-1)^m
   \Lap_{t,\gamma} \left ( p_{t}(x,2mh) \right)         \\
&=& \frac{1}{\sqrt{2\gamma }} \left (e^{-  \sqrt{2 \gamma} |x|}   + (e^{ \sqrt{2 \gamma}x}   +e^{ -\sqrt{2 \gamma}x})\sum_{m=1}^\infty (-1)^m
         e^{-  \sqrt{2 \gamma}2mh}  \right). \tion 
By a straightforward calculation, this yields
\equa  
&& \less \int_{-h}^h  \Lap_{t,\gamma}(p_{t}(x,y) )
  \Lap_{t,\gamma}( \PP_{0,t,x} ( \T_{(-h,h)}>t )   p_t(0,x) ) dx \notag \\
&=&   \frac{h}{2 \gamma} e^{- \sqrt{2 \gamma}y} \frac{1- e^{-\sqrt{2 \gamma} \,2h}}{ 1+ e^{-\sqrt{2 \gamma} \,2h}} \notag\\
   &=&  \frac{h}{2 \gamma} e^{- \sqrt{2 \gamma}y} \tanh( h\sqrt{2 \gamma}).
\tion 
We continue with the Laplace transform of \eqref{function}. From  \eqref{poisson-summation}  we get
\equa 
\Lap_{t,\gamma}\left ( \frac{F(h/\sqrt{t})}{h\sqrt{2\pi t}}\right )&=& \frac{1}{h^2} \sum_{k=1}^\infty \int_0^\infty \exp\left (-\frac{(2k-1)^2\pi^2 t}{8 h^2}\right )
\exp(-\gamma t)dt \notag  \\
&=&\sum_{k=1}^\infty\frac{8}{(2k-1)^2\pi^2 t + 8 h^2\gamma} \notag   \\
&=& \frac{ \tanh( h \sqrt{2 \gamma})}{h\sqrt{2\gamma}},
\tion 
where we  use that for any  $x \in \R$ it holds  
$$\tanh\left (\frac{\pi x}{2} \right) =  \frac{4x}{\pi} \sum_{k=1}^\infty\frac{1}{(2k-1)^2  + x^2}$$ 
(see  \cite[Subsection 1.421]{Gradshteyn}).  From  the  convolution formula 
we conclude that  for $y\ge h$ it holds
\equa 
  \Lap_{T,\gamma} \left (h \int_{0}^T   p_{T-t}(0,y)    \frac{F(h/\sqrt{t})}{\sqrt{2\pi t}}dt \right)  
&=&h \Lap_{t,\gamma} \left (p_t(0,y) \right)\Lap_{t,\gamma} \left (\frac{F(h/\sqrt{t})}{\sqrt{2\pi t}}\right)  \notag  \\
&=& \frac{h}{\sqrt{2 \gamma}} e^{- \sqrt{2 \gamma}y} \,\,   \frac{ \tanh( h\sqrt{2 \gamma})}{\sqrt{2 \gamma}} .
\tion 
This implies  \eqref{another-rep}, since we have shown that
\equal \label{equal-with-kolm}
&& \less 
\int_{0}^T \int_{-h}^h   p_{T-t}(x,y) \PP_{0,t,x} ( \T_{(-h,h)}>t )   p_t(0,x) dxdt \notag  \\
&& = h \int_{0}^T   p_{T-t}(0,y)  \frac{F(h/\sqrt{t})}{\sqrt{2\pi t}} dt.
\tionl 
\eqref{puzzleBessel} By \eqref{Besselmean1exit} it holds
 $$\E^{(3)}_{x,T,y}[\T_{(x-h,x+h)}]  
=  \int_{0}^T \int_{x-h}^{x+h}   \frac{z}{x} \Delta(z-x,h,t)   \frac{r^{(3)}_{T-t}(z,y)}{r^{(3)}_T(x,y)} dz dt. $$ 
We notice that $\Delta(z-x,h,t)$ given in \eqref{Delta} can be written as
\begin{align} 
\Delta(z-x,h,t) &= \frac{1}{\sqrt{2\pi t}} \sum_{m=-\infty}^{\infty} \left (e^{-\frac{(z-x+2h(2m))^2}{2t}} - e^{-\frac{(z-x+2h(2m+1))^2}{2t}} \right ) \nonumber\\
& = p_t(x,z)\sum_{m=-\infty}^{\infty}(-1)^m e^{-\frac{2mh(mh - (x-z))}{t}}  \nonumber\\
& =  p_t(x,z) \PP_{0,t,z{-}x}(\T_{(-h,h)} > t), \nonumber
\end{align}
where (\ref{general-Kolmogorov-relation}) is used for the last line. 
Since by \eqref{transition-Bessel3}  it holds 
\begin{align}\label{R3limit1}
\frac{r^{(3)}_{T-t}(z,y)}{r^{(3)}_T(x,y)} =  \frac{x}{z} \frac{p_{T-t}(z,y) - p_{T-t}(z,-y)}{p_T(x,y) - p_T(x,-y)},
\end{align}
we have
\begin{align} 
&\hspace{-1em}  (p_T(x,y)-p_T(x,-y))^{-1}  \E^{(3)}_{x,T,y}[\T_{(x{-}h,x{+}h)}]  \notag\\
& =\int_{0}^T \int_{x-h}^{x+h}  \PP_{0,t,z{-}x}(\T_{(-h,h)} > t)  p_{t}(x,z) (p_{T-t}(z,y) - p_{T-t}(z,-y)) dz dt\notag \\
& = \int_{0}^T\int_{-h}^{h}  \PP_{0,t,u}(\T_{(-h,h)} > t) p_t(0,u)p_{T-t}(u,y{-}x)du dt \nonumber\\
& \quad \quad -  \int_{0}^T \int_{-h}^{h} \PP_{0,t,u}(\T_{(-h,h)} > t) p_t(0,u)p_{T-t}(u,-(y{+}x))du dt  \nonumber\\
& =  h\int_{0}^T p_{T-t}(0,y{-}x) \frac{F(h/\sqrt{t})}{\sqrt{2\pi t}} dt- h\int_{0}^{T} p_{T-t}(0,{y{+}x}) \frac{F(h/\sqrt{t})}{\sqrt{2\pi t}} dt, \nonumber
\end{align}
where the last equality is implied by \eqref{equal-with-kolm}.
Then \eqref{R3limit1} yields
\begin{align*}
\E^{(3)}_{x,T,y}[\T_{(x{-}h,x{+}h)}] 
& = h \int_{0}^T \frac{r^{(3)}_{T-t}(x,y)}{r^{(3)}_T(x,y)}\frac{F(h/\sqrt{t})}{\sqrt{2\pi t}} dt.
\end{align*}
\end{proof}

\begin{remark} 
\begin{enumerate}[(i)]
\item We have not been able to find a probabilistic explanation for the appearance of the Kolmogorov distribution function in the representations 
\eqref{another-rep} and \eqref{puzzleBessel-rep}.
\item Notice that the  integrands w.r.t.~$t$ of  the expressions in  [Theorem \ref{brown-and-bessel}, equation \eqref{mean-bridge-rep}] and 
in  [Theorem \ref{representation-with-Kolm}, equation \eqref{another-rep}] do not coincide.
To see this, one can compare the double Laplace transform of   $$ hp_{T-t}(0,y) \frac{F(h/\sqrt{t})}{\sqrt{2\pi t}} \quad  \text {and }  \quad \int_{-h}^{h} p_{T-t}(z,y)\Delta(z,h,t) dz.$$ 
 Recall that $\frac{1}{\sqrt{2\pi t}}=  p_t(0,0),$
 and $F(h/\sqrt{t})=\PP_{0,t,0}(\T_{(-h,h)}>t)$ by \eqref{Kolmogorov-relationII}, so that, by a similar  computation as in the proof of  Theorem \ref{representation-with-Kolm},
 \equa
&& \less \Lap_{T, \gamma} \left (h \int_{0}^T   p_{T-t}(0,y) \frac{F(h/\sqrt{t})}{\sqrt{2\pi t}} e^{-\lambda t}dt \right)  \notag \\
&=&\Lap_{T, \gamma} \left (h \int_{0}^T   p_{T-t}(0,y)  \PP_{0,t,0}(\T_{(-h,h)}>t) p_t(0,0) e^{- \lambda t}dt \right)  \notag \\
&=& \frac{h}{\sqrt{2 \gamma}} e^{- \sqrt{2 \gamma}y} \,\,   \frac{ \tanh( h\sqrt{2 (\gamma+\lambda)})}{\sqrt{2 (\gamma+\lambda)}}.
\tion
On the other hand, we have  $\Delta(z,h,t)=   \PP_{0,t,z} ( \T_{(-h,h)}>t )p_t(0,z),$ which yields after some calculations that
\equa 
&& \less  \Lap_{T, \gamma} \left (\int_{0}^T \int_{-h}^h  p_{T-t}(z,y) \PP_{0,t,z}(\T_{(-h,h)}>t) p_t(0,z)dz e^{- \lambda t}dt \right)  \notag \\
&=& \frac{1}{ \lambda \sqrt{2 \gamma}} e^{- \sqrt{2 \gamma}|y|} \,\,  \left [ 1- \frac{ e^{- \sqrt{2 \gamma}h} + e^{\sqrt{2 \gamma}h} }{ e^{- \sqrt{ 2(\gamma+\lambda)}h} 
+ e^{\sqrt{ 2(\gamma+\lambda)}h} }\right ].
\tion
\end{enumerate}
\end{remark}

\bigskip
We conclude this section by pointing out a connection between the Kolmogo\-rov distribution function and the density of the last visit of $0$ by a Brownian motion before 
$\T_{(-h,h)}.$  This connection has been noticed by Knight in  \cite[Corollary 2.1]{Knight}.  
 
 We  provide here a different proof for this fact.

\begin{proposition} When  $\widehat W$ is a Brownian motion killed at $\mathcal{T}_{(-h,h)},$ then 
$t \mapsto \frac{F(h/\sqrt{t})}{ h \sqrt{2\pi t}}$  is the density of   the last passage time   $\lambda_0 := \sup \{t>0: \widehat W_t = 0\}.$ 
\end{proposition}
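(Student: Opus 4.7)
The plan is to express $\PP_0(\lambda_0 > t)$ by conditioning on $\widehat W_t$ via the Markov property, to differentiate in $t$ using the heat equation satisfied by the killed transition density $\Delta$, and to identify the outcome with the Kolmogorov function through relation \eqref{Kolmogorov-relationII}.

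First, since the scale function of Brownian motion is $s(x)=x$, for $x\in(-h,h)\setminus\{0\}$ the probability that a Brownian motion started at $x$ visits $0$ before exiting $(-h,h)$ equals $(h-|x|)/h$. The event $\{\lambda_0>t\}$ occurs iff $\widehat W$ is alive at time $t$ and its post-$t$ continuation reaches $0$, so the strong Markov property at time $t$, together with $\PP_0(W_t\in dx,\mathcal{T}_{(-h,h)}>t)=\Delta(x,h,t)\,dx$, gives
\[
\PP_0(\lambda_0>t)=\int_{-h}^{h}\Delta(x,h,t)\,\frac{h-|x|}{h}\,dx.
\]
The single point $x=0$, where the hitting probability jumps to $1$, does not affect the integral since $W_t$ has a density.

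Next, I would differentiate in $t$ using the heat equation $\partial_t\Delta=\tfrac{1}{2}\partial_{xx}\Delta$ on $(-h,h)$ together with the Dirichlet condition $\Delta(\pm h,h,t)=0$ (readily checked from \eqref{Delta}). Writing $u(x):=(h-|x|)/h$, the integral is split into $(-h,0)$ and $(0,h)$ to handle the kink of $u$ at $0$, and integration by parts is performed twice on each subinterval. The boundary contributions at $\pm h$ vanish by the Dirichlet condition, the volume terms vanish because $u$ is affine on each half-interval, and the contributions at $x=0$ combine — the two $\partial_x\Delta(0,h,t)$ terms cancel while the two $\Delta(0,h,t)$ terms add — to give
\[
\frac{d}{dt}\PP_0(\lambda_0>t)=-\frac{1}{h}\,\Delta(0,h,t).
\]

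Finally, by \eqref{Kolmogorov-relationII} and the definition of the Brownian bridge,
\[
\Delta(0,h,t)=p_t(0,0)\,\PP_{0,t,0}(\mathcal{T}_{(-h,h)}>t)=\frac{F(h/\sqrt{t})}{\sqrt{2\pi t}},
\]
so $f_{\lambda_0}(t)=-\tfrac{d}{dt}\PP_0(\lambda_0>t)=F(h/\sqrt{t})/(h\sqrt{2\pi t})$, as required. The main technical point is the justification of differentiation under the integral sign and of the termwise differentiation of the series in \eqref{Delta}, but both follow routinely from the uniform Gaussian decay of the summands and their $x$- and $t$-derivatives on subsets bounded away from $t=0$.
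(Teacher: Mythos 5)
Your argument is correct, but it proceeds by a genuinely different route than the paper. The paper invokes the general last-exit-time formula for a transient diffusion, $\PP_0^{\widehat W}(\lambda_0\in dt)=\widehat q_t(0,0)\,dt/\widehat G_0(0,0)$ (Borodin--Salminen), substitutes the explicit killed transition density and the Green kernel value $\widehat G_0(0,0)=h$, and recognizes the resulting theta series as $F(h/\sqrt{t})/(h\sqrt{2\pi t})$. You instead derive the density from first principles: the Markov property at time $t$ combined with the gambler's-ruin probability $u(x)=(h-|x|)/h$ gives $\PP_0(\lambda_0>t)=\int_{-h}^{h}\Delta(x,h,t)u(x)\,dx$, and then the heat equation plus two integrations by parts (with the boundary terms killed by the Dirichlet condition and the volume terms by the piecewise affinity of $u$) isolates the kink of $u$ at the origin, yielding $-\Delta(0,h,t)/h$ for the derivative; the identification $\Delta(0,h,t)=p_t(0,0)\,\PP_{0,t,0}(\T_{(-h,h)}>t)=F(h/\sqrt{t})/\sqrt{2\pi t}$ then matches identities the paper itself records (the formula $\Delta(z,h,t)=p_t(0,z)\PP_{0,t,z}(\T_{(-h,h)}>t)$ and \eqref{Kolmogorov-relationII}). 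All steps check out: $u$ is indeed the normalized Green function $\widehat G_0(x,0)/\widehat G_0(0,0)$, and your computation is in effect a self-contained proof of the general last-exit formula in this special case, making visible where the factor $1/h=1/\widehat G_0(0,0)$ comes from (the jump $u'(0^-)-u'(0^+)=2/h$). What the paper's route buys is brevity by citing a standard identity; what yours buys is independence from that citation and a transparent probabilistic/analytic mechanism, at the cost of the (routine but necessary) justification of differentiating the series under the integral, which you correctly flag.
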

\begin{proof}
It is well known that (cf. \cite[Part I, Chapter II, No 20, p.~26]{BorodSalm}) 
\begin{align}\label{last-exit-density}
\PP_0^{\widehat W}(\lambda_0 \in dt) = \frac{\widehat q_t(0,0)}{\widehat G_0(0,0)}dt,
\end{align}
where 
$$\widehat q_t(x,y) = \frac{1}{\sqrt{2 \pi t}} \sum_{k=-\infty}^{\infty} \pr{e^{-\frac{(x-y+2k\cdot2h)^2}{2t}} - e^{-\frac{(x+y+(2k+1)2h)^2}{2t}}}$$
is the transition density (w.r.t.~the Lebesgue measure) of $\widehat W$  (cf. \cite[Part I, Appendix I, No 6, p.~126]{BorodSalm}), and
\begin{align*}
\widehat G_0(x,y) = \left\{ \begin{array}{ll}
 {\displaystyle\frac{(x+h)(h-y)}{h}}, & \quad -h \leq x \leq y \leq h,\\
&\\
 {\displaystyle \frac{(y+h)(h-x)}{h}}, & \quad -h \leq y \leq x \leq h,
\end{array}
\right.
\end{align*}
denotes the 0-resolvent kernel (see \cite[Part I, Appendix I, No 6, p.~126]{BorodSalm})).
From (\ref{last-exit-density}) we get 
 \begin{align*}
\PP_0^{\widehat W}(\lambda_0 \in dt)/dt & = \frac{1}{h \sqrt{2 \pi t}} \sum_{k=-\infty}^{\infty} \pr{e^{-\frac{4(2k)^2 h^2}{2t}} - e^{-\frac{4(2k+1)^2h^2}{2t}} }\\
& = \frac{1}{h\sqrt{2\pi t}} \pr{1 + 2\sum_{k=1}^{\infty}(-1)^k e^{-\frac{4k^2h^2}{2t}}}.
\end{align*}
\end{proof}


\section{Application} \label{application}
In this section we apply  our previous results  to prove in Corollary  \ref{corollary} an estimate  which was used by Walsh  in   \cite{Walsh}. The convergence analysis  there succeeds  to identify the leading constants appearing in the  error expansion  of 
$$\E  [g(X^{(n)}_n)- g(X_T)]$$ 
(cf. \cite[equation (14)]{Walsh})  
in terms of expressions depending on the  function $g$ (which is assumed to be exponentially  bounded and piecewise twice continuously differentiable). Here 
  $X_t = \sigma W_t$ for $t\ge 0$ and  $(X^{(n)}_k)_{k=0}^n$ denotes a symmetric simple random walk
with step size $ \sigma\sqrt{T/n}.$
For simplicity, we will put  $\sigma=1$  and hence consider  $$\E  [g(W^{(n)}_n) - g(W_T)].$$  The central idea for this error analysis is
to build  this random walk from a given  Brownian motion $(W_t)_{t \ge 0 }$ so that both processes are on the same probability space:
Fix $T>0$ and $n \in \N.$ For  $h:=\sqrt{T/n}$ 
 define $\tau_0 :=0$ and \equa   \tau_k := \inf\{ t> \tau_{k-1}: |W_t-W_{\tau_{k-1}}| = h \}, \quad k \ge 1.
\tion
Then  $(W_{\tau_k}   -W_{\tau_{k-1}})_{k=1}^\infty$ is a sequence of i.i.d. random variables  with  
$\PP(  W_{\tau_{k}}   -W_{\tau_{k-1}} =  h)$=  $\PP(  W_{\tau_{k}}   -W_{\tau_{k-1}} = - h)=     \frac{1}{2}.
$
Let  $k_*$ be such that   $\tau_{k_*}\le T< \tau_{k_*+1}.$
In \cite[Section 9]{Walsh} the conditional probability
\equa 
q(x) := \PP( k_* \text{ is even }| W_T = x), \quad x \in \R
\tion
\begin{wrapfigure}{r}{0.59\textwidth} 
  \vspace*{-3.7em} \hspace*{-2.6em}
     \includegraphics[width=0.71\textwidth]{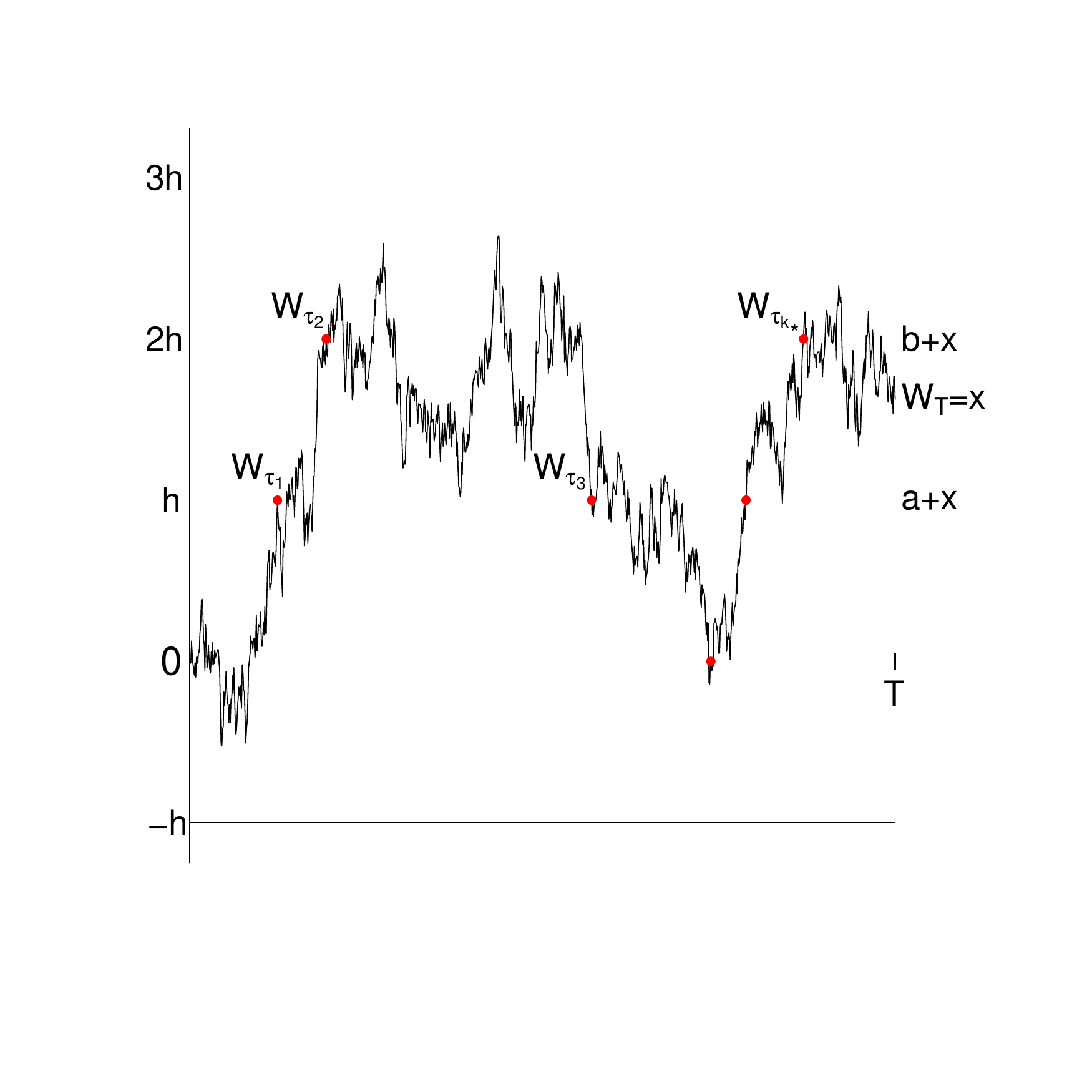}\vspace*{-7em}
     \end{wrapfigure}
is introduced (there $k_* $ is denoted by $L$). We want to study and estimate $q.$
Notice that $ k_*$ is an even number if and only if $W_{\tau_{k_*}}$ is an even multiple of $h.$ 
The process $(W_t)_{0 \le t \le T}$ given $W_0=0, W_T=x$ is
identical in law with a Brownian bridge from $0$ to $x$
and of length $T.$ We denote this bridge by $\left ( B^{0,T,x}_t\right)_{0 \le t \le T}$.
By time reversion, we get the Brownian bridge $\left ( B^{x,T,0}_t\right)_{0 \le t \le T}.$ 

 We fix $k \in \mathbb{Z}$ and  assume $x  \in ((2k-1)h, (2k+1)h).$  For simplicity  put  
\equal \label{h-points}
\underline{h} :=(2k-1)h, \quad   \overline{h} :=(2k+1)h,   \quad   h_{e}  := 2kh
\tionl 
for the lower and  upper value, and for 
 the 'even' midpoint of the interval. Then, given $W_T=x,$ we have that '$k_*$ is even' is the same as  '$W_{\tau_{k_*}}=  h_e$'.   
 For the time-reversed bridge associated to $\PP_{x,T,0}$  we get 
 \equa 
q(x)= \PP(k_* \text{ is even}| W_T = x)
 = \left\{ \begin{array}{ll}
\PP_{x,T,0}(\T_{h_e} < \T_{\underline{h}}), &  \underline{h} <x<  h_e,\\
\PP_{x,T,0}(\T_{h_e} < \T_{\overline{h}}), & h_e < x  < \overline{h},
\end{array}
\right.
\tion  
where $\T_y(\om) := \inf\{t>0: \omega(t)=y \}$ for $\omega \in C[0,T].$
 For the time-reversed and by $-x$  shifted bridge 
this means it hits the  'shifted even line'  $h_{e}-x$  before the  shifted odd one:
\equa 
 \PP(k_* \text{ is even}| W_T = x)
 = \left\{ \begin{array}{ll}
\PP_{0,T,-x}(\T_{h_e-x} < \T_{\underline{h} -x}), &  \underline{h} <x<  h_e,\\
\PP_{0,T,-x}(\T_{h_e-x} < \T_{\overline{h} -x}), & h_e < x  < \overline{h}.
\end{array}
\right.
\tion 
For any $a < 0 < b$ and $y \notin (a,b)$ it holds
\equa
\E \big[B^{0,T,y}_{\T_{(a,b)}}\big] &=& a  (1-\PP_{0,T,y}(\T_b < \T_a)) + b \, \PP_{0,T,y}(\T_b < \T_a).
 \tion
 Consequently, 
 \begin{align*}
\PP_{0,T,y}(\T_b < \T_a) = \frac{-a}{b-a} + \frac{\E \big[B^{0,T,y}_{\T_{(a,b)}}\big]}{b-a}.
\end{align*}
Hence
\begin{align}\label{q-equals}
q(x) = \left\{ \begin{array}{ll}
\dfrac{x-\underline{h} }{h} + 
 \dfrac{\E \big[B^{0,T,-x}_{\T_{( \underline{h} -x, h_e-x )}}\big]}{h}, \quad  \underline{h} <x<  h_e,\\
\dfrac{\overline{h} -x}{h} - 
 \dfrac{\E \big[B^{0,T,-x}_{\T_{(h_e-x,  \overline{h}-x )}}\big]}{h}, \quad h_e < x  < \overline{h}.
\end{array}
\right.
\end{align}
Arguing that  Brownian bridge and Brownian motion have  a similar exit behavior for small $h,$ in \cite[equation (20)]{Walsh} it is  stated that
\equal \label{walsh-O}
 q(x) = \frac{\dist(x,\N^h_o)}{h} + O(h) ,
 \tionl
where  $x\in \R,$
$\N^h_o = \set{(2k{+}1)h: k \in \mathbb{Z}}$   and  $\dist(x,\N^h_o) := \inf\{|x-y|: y\in \N^h_o \}.$
If one compares  \eqref{walsh-O} with \eqref{q-equals} one notices that 
$$\dist(x,\N^h_o)  \quad \text{is  equal to} \quad  x-\underline{h} \text{ or } \overline{h} -x,$$ so that we should have
$$ \E[ B^{0,T,-x}_{\T(x)}]=  O(h^2),$$
where  \eqref{h-points} was used to rewrite $\T_{( \underline{h} -x, h_e-x )}$ and $\T_{(h_e-x,  \overline{h}-x )}$ as
\equal \label{Tau-of-x} 
  \T(x):= \T_{(kh-x, (k+1)h-x)} \text{ if }  x\in (kh, (k+1)h).
 \tionl
In view of this we prove  the following lemma.
\begin{lemma}\label{bridgemeanlemma}
Suppose that $a < 0 < b$, $y \notin (a,b)$, $T > 0$ and $h > 0$. Then
\begin{align}\label{bridgemeanineq}
\big|\E \big[B^{0,T,y}_{\T_{(a,b)}}\big]\big|
\le \frac{\E_{0,T,y}[ \T_{(a ,b)}]}{T} \left(2(|a| \vee b) + |y| + 3\sqrt{2 T}\right).
\end{align}
\end{lemma}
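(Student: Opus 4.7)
The plan is to combine the SDE characterization of the bridge with optional stopping, and then to perform a carefully engineered splitting of the singular drift integral. Under the bridge law $\PP_{0,T,y}$ the canonical process satisfies
\begin{equation*}
B^{0,T,y}_t = M_t + \int_0^t \frac{y - B^{0,T,y}_s}{T-s}\,ds,
\end{equation*}
where $M$ is a $\PP_{0,T,y}$-Brownian motion. Since $y \notin (a,b)$ the exit time $\tau := \T_{(a,b)}$ satisfies $\tau < T$ a.s.\ and is bounded by $T$, so by optional stopping $\E_{0,T,y}[M_\tau] = 0$ and hence
\begin{equation*}
\E\bigl[B^{0,T,y}_\tau\bigr] = \E\!\left[\int_0^\tau \frac{y - B^{0,T,y}_s}{T-s}\,ds\right].
\end{equation*}
I would then algebraically decompose $(y-B^{0,T,y}_s)/(T-s) = y/T + (ys/T - B^{0,T,y}_s)/(T-s)$: the first piece immediately contributes $(y/T)\E[\tau]$, producing the $|y|\E[\tau]/T$ term of the target bound. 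Setting $\beta_s := B^{0,T,y}_s - ys/T$, the remaining task is to bound $\bigl|\E\!\left[\int_0^\tau \beta_s/(T-s)\,ds\right]\bigr|$, using $|\beta_s| \leq (|a|\vee b) + |y|$ on $\{s < \tau\}$.

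The key step is to split this integral at $T/2$. On $[0, \tau \wedge T/2]$ the weight $1/(T-s) \leq 2/T$ is tame, so this portion is bounded in absolute value by $(2/T)\bigl((|a|\vee b) + |y|\bigr)\tau$, whose expectation is at most $2\bigl((|a|\vee b) + |y|\bigr)\E[\tau]/T$ and accounts for the $2(|a|\vee b)$ in the constant. The contribution from $[T/2, \tau]$ is nonzero only on $\{\tau > T/2\}$, an event of probability $\leq 2\E[\tau]/T$ by Markov's inequality, which already yields the desired factor $\E[\tau]/T$. To bound the integrand's size on this event, I would pass to the representation \eqref{bridge-representation-II}, writing $B^{0,T,y}_t = (T-t)Y_t + yt/T$ with $Y_t = \int_0^t dW_s/(T-s)$, and use the It\^o identity $(T-t)Y_t = W_t - \int_0^t Y_s\,ds$ to convert the singular integral into an expression controlled by $|W_\tau|$ and $|(T-\tau)Y_\tau| \leq (|a|\vee b) + |y|$. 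Applying Doob's $L^2$-inequality ($\E[\sup_{t \leq T} W_t^2] \leq 4T$) together with $\E|W_T| = \sqrt{2T/\pi}$ delivers the $\sqrt{T}$-scaled constant, which, after optimization of numerical factors, assembles into the $3\sqrt{2T}$ piece.

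The main obstacle is precisely the singular weight $1/(T-s)$: a naive estimate $\int_0^\tau ds/(T-s) = \log(T/(T-\tau))$ cannot in general be bounded by a constant multiple of $\E[\tau]/T$, because $\log(T/(T-\tau))$ need not be even integrable when $\tau$ is allowed to be close to $T$ with only $\E[\tau] \leq T$ as control. The splitting above circumvents this by exploiting the two probabilistic facts that (i) the contribution from early times has a bounded coefficient, and (ii) the event of late exit is rare in the sense of Markov, so that one only has to combine these small-probability estimates with the $L^2$-controlled Brownian fluctuations on $\{\tau > T/2\}$; the non-trivial point is that the Brownian bounds must be applied on the coupling \eqref{bridge-representation-II} rather than directly to $B^{0,T,y}$, since only there does the cancellation in $(T-t)Y_t = W_t - \int_0^t Y_s\,ds$ remove the singularity without sacrificing sharpness.
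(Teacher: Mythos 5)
Your overall architecture --- representing the bridge through a driving Brownian motion, splitting at $T/2$, and using Markov's inequality so that the late-exit event costs a factor $\E_{0,T,y}[\T_{(a,b)}]/T$ --- matches the paper's, and your observation that a naive bound $\int_0^\tau ds/(T-s)=\log(T/(T-\tau))$ is useless is exactly the right worry. The genuine gap is in the decisive estimate on $\{\tau>T/2\}$. You propose to control the Brownian fluctuation there by Cauchy--Schwarz against Doob's bound $\E[\sup_{t\le T}W_t^2]\le 4T$, which gives
\begin{align*}
\E\big[\,|W_\tau|\,\one_{\{\tau>T/2\}}\big]\le \big(\E[\sup_{t\le T}W_t^2]\big)^{1/2}\,\PP(\tau>T/2)^{1/2}\le 2\sqrt{T}\,\Big(\tfrac{2}{T}\E[\tau]\Big)^{1/2}=2\sqrt{2}\,\sqrt{\E[\tau]}\,.
\end{align*}
This is of order $\sqrt{\E[\tau]}$, not $\E[\tau]/\sqrt{T}$, and since $\sqrt{\E[\tau]}\ge\E[\tau]/\sqrt{T}$ whenever $\E[\tau]\le T$ (always), your bound is strictly weaker than \eqref{bridgemeanineq} in the only regime that matters, $\E[\tau]=O(h^2)\ll T$: it would yield $|\E[B^{0,T,y}_{\T_{(a,b)}}]|=O(h)$ instead of $O(h^2)$ and destroy the application in Corollary \ref{corollary}. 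The paper avoids this by never estimating $\sup_t|W_t|$: it applies Cauchy--Schwarz to the stochastic integral $\int_0^{\tau\wedge T/2}(T-s)^{-1}dW_s$, whose It\^o-isometry second moment $\E\int_0^{\tau\wedge T/2}(T-s)^{-2}ds\le 4T^{-2}\E[\tau]$ is \emph{itself} proportional to $\E[\tau]$, so both Cauchy--Schwarz factors contribute $\sqrt{\E[\tau]}$ and the product is a full power of $\E[\tau]$. (A repair along your lines would replace Doob's inequality by Wald's identity $\E[W_\tau^2]=\E[\tau]$ for the bounded stopping time $\tau$, and note that $W_{T/2}$ and $(T/2)Y_{T/2}$ are deterministically bounded by multiples of $(|a|\vee b)+|y|$ on $\{\tau>T/2\}$; but that is not the argument you wrote.)

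A secondary defect is the bookkeeping of constants: the term $y/T$ from your drift decomposition contributes $|y|\,\E[\tau]/T$, and your early-time bound $|\beta_s|\le(|a|\vee b)+|y|$ contributes a further $\big(2(|a|\vee b)+2|y|\big)\E[\tau]/T$, so even before the late-time piece you are at coefficient $2(|a|\vee b)+3|y|$, exceeding the $2(|a|\vee b)+|y|$ claimed in \eqref{bridgemeanineq}. The paper avoids this double-counting of $|y|$ by splitting the random variable $B^{0,T,y}_{\T_{(a,b)}}$ itself rather than the drift integral: on $\{\T_{(a,b)}>T/2\}$ it is trivially bounded by $|a|\vee b$, and on $\{\T_{(a,b)}\le T/2\}$ one works directly with the representation \eqref{bridge-representation-II}, in which no singular drift integral appears and $|y|$ enters exactly once through the term $\tfrac{t}{T}y$.
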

\begin{proof}
By Markov's inequality we have 
\[ \left |\E \big [ B^{0,T,y}_{ \T_{(a ,b)}} \one_{\{\T_{(a ,b)} > T/2\}} \big] \right | 
\le (|a| \vee b) \PP_{0,T,y} (\T_{(a ,b)} > T/2)
\le \frac{2(|a| \vee b)}{T} \E_{0,T,y}[ \T_{(a ,b)}].
\] 
To estimate $ \left |\E \big[ B^{0,T,y}_{ \T_{(a ,b)}} \one_{\{\T_{(a ,b)} \le T/2\}}\big] \right |$ we let
$$\tilde{B}^{0,T,y}_t:= (T-t)\int_0^t \frac{dW_s}{T-s}  + \frac{t}{T}y, \quad t \in [0,T), $$
and $\tilde{B}^{0,T,y}_T :=y$. Then $\big( \tilde{B}_t^{0,T,y}\big)_ {0\leq t\leq T}     \stackrel{d}{=} \big( B_t^{0,T,y}\big)_ {0\leq t\leq T}  $ (cf. \eqref{bridge-representation-II}).
Setting
$$\tilde{\T}:= \inf\{t\in [0,T]:  \tilde{B}^{0,T,y}_t \notin (a,b) \}$$
yields $\E \big [\tilde{B}^{0,T,y}_{\tilde{\T}} \one_{\{\tilde{\T} \le  T/2\}}\big] =\E \big [ B^{0,T,y}_{ \T_{(a ,b)}} \one_{\{ \T_{(a ,b)} \le  T/2\}}\big] $ and
\equa
   \left |\E \big[\tilde{B}^{0,T,y}_{\tilde{\T}} \one_{\{\tilde{\T} \le  T/2\}}\big] \right |
   &\le&\!\!  T \left |\E \bigg[\one_{\{\tilde{\T} \le  T/2\}}\int_0^{\tilde{\T} \wedge(T/2)} (T-s)^{-1} dW_s \bigg]\right | \\
   && \!\!+  \E\left |(\tilde{\T} \wedge(T/2))  \int_0^{\tilde{\T} \wedge(T/2)} (T-s)^{-1} dW_s\right |  +  \frac{|y|}{T}\E  [\tilde{\T}] .
\tion
By the optional stopping theorem, H\"older's and Markov's inequality it holds that
\equa 
\left |\E \bigg[  \one_{\{\tilde{\T} \le  T/2\}}   \int_0^{\tilde{\T} \wedge\frac{T}{2}}\! (T-s)^{-1} dW_s   \bigg]\right | 
&\!=\!&\!\! \left |\E \bigg[\one_{\{\tilde{\T} >  T/2\}} \int_0^{\tilde{\T} \wedge \frac{T}{2}}(T-s)^{-1}  dW_s   \bigg] \right | \\
&\!\le \!&\!\! \left(\frac{2}{T}\E [\tilde{\T}] \right)^\half \left(\E \bigg[\int_0^{\tilde{\T} \wedge\frac{T}{2}}  \frac{ds}{(T/2)^2}\bigg] \right)^\half  \\
&\!\le \!&\!\! \left(\frac{2}{T} \E[\tilde{\T}] \right)^\half \left( \left( \frac{2}{T} \right)^\half \E \left( \tilde{ \T} \wedge \frac{T}{2} \right) \right)^\half\\
&\!\le\! & (\sqrt{2/T})^3 \, \E  [\tilde{\T}].
\tion
Again by H\"older's inequality, 
\equa 
\E\left |\Big(\tilde{\T} \wedge\frac{T}{2}\Big)  \int_0^{\tilde{\T} \wedge\frac{T}{2}}  \frac{dW_s}{T-s} \right |
&\le & \left (\E \left (\tilde{\T} \wedge \frac{T}{2} \right )^2 \right)^\half\left (\E \bigg[\int_0^{\tilde{\T} \wedge\frac{T}{2}}  \frac{ds}{(T-s)^2}\bigg] \right)^\half \\
& \le & \left( \frac{T}{2} \E[\tilde{\T}] \right)^{\frac{1}{2}} \left( \left( \frac{2}{T}\right)^\half \E \left(\tilde{T} \wedge \frac{T}{2} \right) \right)^\half\\
&\le & \sqrt{2/T} \, \E [\tilde{\T}].
\tion
From the above estimates and $\E_{0,T,y}[ \T_{(a ,b)}] =\E[\tilde{\T}]$  we get \eqref{bridgemeanineq}.
\end{proof}
Now we derive estimates for $\E_{0,T,y}[ \T_{(a ,b)}].$
\begin{lemma} \label{exit-time-estimate-bridge}
Let $T>0$ be fixed. Suppose that $a < 0 < b$ and $y \notin (a,b).$ 
\begin{enumerate}[(i)]
\item \label{small-y}  It holds 
\equal \label{tau-estimate}
\E_{0,T,y}\pl{\T_{(a, b)}} &\le & \left \{\begin{array}{lll}4b(|a|+y/2)\wedge T  \, \, &\text{ if }& \, \, y \ge b,  \\
                                            4 |a|(b+|y|/2)\wedge T &\text{ if }& \,\,y \le a. \end{array}   \right .
\tionl
\item \label{large-y} Define for 
$|y| \ge h$
\equal \label{C-and-meanfirstexit} 
C(T,h,y):= \frac{\E_{0,T,y} \big[\T_{(-h,h)}\big]}{h^2}.
\tionl
Then
\equa 
\E_{0,T,y} \big[\T_{(a,b)}\big] & \leq  C(b-a)^2 \quad \textrm{if } \abs{y} \geq b-a,
\tion 
where   $C = C(T, b-a, y),$  and it holds     
$$\lim_{b-a \to 0, \,  a<0<b}  C(T, b-a,y) = 1.$$
\end{enumerate}
\end{lemma}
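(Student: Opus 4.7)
The plan is to handle the two parts separately: (ii) is a direct corollary of Theorem \ref{brown-and-bessel} combined with interval monotonicity, whereas (i) requires a more delicate analysis of the integral representation \eqref{representation-distribution}. Starting with (ii), the assumption $a<0<b$ together with the definition $h:=b-a$ forces $b\le h$ and $|a|\le h$, so $(a,b)\subset(-h,h)$; hence $\T_{(a,b)}\le \T_{(-h,h)}$ pathwise. Taking expectations under $\PP_{0,T,y}$ yields
\[
\E_{0,T,y}[\T_{(a,b)}] \le \E_{0,T,y}[\T_{(-h,h)}] = C(T,b-a,y)\,(b-a)^2
\]
by the very definition \eqref{C-and-meanfirstexit}. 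The hypothesis $|y|\ge b-a=h$ guarantees $y\notin(-h,h)$, so the right-hand side is finite by Theorem \ref{brown-and-bessel}. The claimed limit $\lim_{b-a\to 0}C(T,b-a,y)=1$ is then a direct restatement of \eqref{bridge-limit}, valid because $|y|\ge h>0$ forces $y\ne 0$.

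For (i), the bound $\E_{0,T,y}[\T_{(a,b)}]\le T$ is immediate from $\T_{(a,b)}\le T$ almost surely under $\PP_{0,T,y}$, since the continuous bridge satisfies $B_T^{0,T,y}=y\notin(a,b)$. By the reflection $W\leftrightarrow -W$, which sends $(a,b)$ to $(-b,-a)$ and $y$ to $-y$, it suffices to establish the remaining bound for $y\ge b$. From \eqref{representation-distribution} specialised to Brownian motion,
\[
\E_{0,T,y}[\T_{(a,b)}] = \frac{1}{p_T(0,y)}\int_0^T\!\int_a^b \widehat p_t(0,z)\,p_{T-t}(z,y)\,dz\,dt,
\]
I would exploit the monotonicity $p_{T-t}(z,y)\le p_{T-t}(b,y)$ for $z\in(a,b)$ (the Gaussian density is increasing in $z$ toward $y\ge b$), then apply Markov's inequality together with $\E_0[\T_{(a,b)}]=b|a|$ from \eqref{two-sided-Brownian} to obtain $\PP_0(\T_{(a,b)}>t)\le \min(1,b|a|/t)$, and combine with the pointwise estimate $p_{T-t}(b,y)/p_T(0,y)\le \sqrt{T/(T-t)}\exp(b(2y-b)/(2T))$ deduced from $(y-b)^2/(2(T-t))\ge(y-b)^2/(2T)$. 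Splitting the $t$-integral at a threshold of order $b|a|$ and tracking the resulting constants should then produce $4b(|a|+y/2)$.

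The principal obstacle is matching the precise constant $4b(|a|+y/2)$ rather than a vaguer $C\,b(|a|+y)$: the Gaussian ratio $p_{T-t}(b,y)/p_T(0,y)$ is close to $1$ precisely in the regime $by\lesssim T$, which is where the bound $4b(|a|+y/2)$ is smaller than $T$ and therefore actively used. The factor $4$ emerges from the two-regime Markov split, and the linear summand $2by$ from the Gaussian exponent $b(2y-b)/(2T)$ via $e^{x}\le 1+O(x)$ in that regime; the $\wedge T$ clause absorbs the complementary regime in which no sharper control can be expected.
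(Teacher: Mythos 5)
Your part (ii) is correct and is exactly the paper's argument: $(a,b)\subset(-(b-a),b-a)$, monotonicity of the exit time, the definition \eqref{C-and-meanfirstexit}, and the limit from Theorem \ref{brown-and-bessel}\eqref{bridge-mean}. The trivial bound by $T$ and the reflection reducing (i) to the case $y\ge b$ are also fine.

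The route you propose for the main bound in (i), however, has a genuine gap, and you half-admit it yourself ("should then produce", "the principal obstacle is matching the precise constant"). Two concrete problems. First, after the monotonicity step $p_{T-t}(z,y)\le p_{T-t}(b,y)$ and your Gaussian ratio bound, you are left with an integral of the form
\[
e^{b(2y-b)/(2T)}\int_0^T\sqrt{\tfrac{T}{T-t}}\;\PP_0(\T_{(a,b)}>t)\,dt,
\]
and Markov's inequality only gives $\PP_0(\T_{(a,b)}>t)\le \min(1,b|a|/t)$. The piece $\int_{b|a|}^{T/2}(b|a|/t)\,dt = b|a|\log\bigl(T/(2b|a|)\bigr)$ then produces an extra logarithmic factor; you would need the exponential tail $\PP_0(\T_{(a,b)}>t)\le C e^{-\pi^2 t/(2(b-a)^2)}$, which you do not invoke, to avoid a bound of order $b|a|\log(1/(b|a|))$ rather than $O(b|a|)$. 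A logarithmic loss here would propagate into Corollary \ref{corollary} and ruin the $O(h^2)$ estimate the lemma is designed for. Second, your explanation of where the summand $2by$ in $4b(|a|+y/2)=4b|a|+2by$ comes from is not right: expanding $e^{b(2y-b)/(2T)}\approx 1+by/T$ and multiplying by something of order $b|a|$ yields a correction of order $b^2|a|y/T$, which is multiplicative in $b|a|$, not the additive term $2by$. The paper obtains the constant by an entirely different mechanism: using the representation \eqref{bridge-representation}, the bridge up to time $T/2$ is a time-changed Brownian motion plus the drift $yt/T$, the drift is absorbed by enlarging the interval to $(2a-y,2b)$ (the factor $1-t/T\ge 1/2$ doubles both endpoints and the drift contributes at most $y/2$ on $[0,T/2]$), and then \eqref{two-sided-Brownian} gives exactly $\E_0\pl{\T_{(2a-y,2b)}}=2b\,|2a-y|=4b(|a|+y/2)$. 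You should either adopt that argument or supply the exponential tail estimate and redo the constant tracking; as written, your sketch does not establish \eqref{tau-estimate}.
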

\begin{proof}\eqref{small-y} We first assume that $y \ge b.$ Then
\begin{align*}
\T_{(a,b)}(B^{0,T,y}) &  \stackrel{d}{=} \inf \set{t \in [0,T): (1-\tfrac{t}{T})W_{\frac{Tt}{T-t}} + y\tfrac{t}{T} \notin (a,b)} \\
 & = \inf \set{t \in [0,T): (1-\tfrac{t}{T})W_{\tfrac{Tt}{T-t}}   \notin (a -y\tfrac{t}{T},b-y\tfrac{t}{T} )} \\
 & \le  \inf \set{t \in [0,T): (1-\tfrac{t}{T}) W_{\tfrac{Tt}{T-t}}  \notin (a -y\tfrac{t}{T},b )}  \\
& \leq \inf \set{t \in [0, T/2): W_{\tfrac{Tt}{T-t}} \notin   ( 2a-y, 2b)}.
\end{align*}
Since  $u:[0,T/2) \to  [0,T)$ given by $t \mapsto \frac{Tt}{T-t}$ is one-to-one, increasing and  $t =\frac{Tu(t)}{T+u(t)},$ we get 
\begin{align*}
 \inf \set{t \in [0, T/2)\!: W_{\frac{Tt}{T-t}}\notin\!(  2a-y, 2b)} 
& \!\leq\! \inf \set{u \in [0,T)\!:  W_u \notin(2a-y, 2b)}. 
\end{align*} Since by definition, 
$\T_{(a,b)}(B^{0,T,y}) \le T$ if $y \notin (a,b),$
we have 
\equa
\E_{0,T,y} [\T_{(a,b)}] &\leq& \E [\inf \set{u \in [0,T):  W_u \notin(  2a-y, 2b)}] \wedge T \\
&\le &\E_0 [ \T_{(2a-y,2b)}]\wedge T\\ 
&\leq& \abs{2a-y}2b \wedge T = 4b(\abs{a}+y/2)\wedge T.
\tion

The case $y \le a$ follows similarly.  \bigskip \\ 
\eqref{large-y} For $\abs{y} \geq b - a$  we have 
$\E_{0,T,y} \pl{\T_{(a,b)}} \leq \E_{0,T,y} \pl{\T_{(-(b-a), b-a)}}. $
Using \eqref{C-and-meanfirstexit} with $h=b-a$ gives for $|y| \ge h$ that
\equa
\E_{0,T,y} \pl{\T_{(-h,h)}} &=& h^2\, C(T, h,y), 
\tion
and from   Theorem \ref{brown-and-bessel} \eqref{bridge-mean}  we have that $C(T, h,y)$  converges to $1$  as $h \to 0.$ 
\end{proof}
 
Hence for    $x \in (kh, (k+1)h)$   and $k \in \{-1,0\}$ we get  by Lemma \ref{exit-time-estimate-bridge} 
\eqref{small-y} that  $\E_{0,T,-x}[\T_{(kh-x, (k+1)h-x)}]  \le  ch^2,$        
 and for  $k\not \in \{-1,0\}$   we use \eqref{large-y}.
Then Lemma \ref{bridgemeanlemma} implies $$\E[ B^{0,T,-x}_{\T(x)}]=  O(h^2).$$ 
 However, this equality   does not hold uniformly in $x$ with the consequence that  we can not use \eqref{walsh-O} in integrals like \eqref{q-integral} below. For example, for the 
sequence $x_k := (k+0.5)h$  it holds   
\equal \label{exit-limit}
\E[B^{0,T,-x_k}_{\T(x_k)}] = \E[ B^{0,T,-x_k}_{\T_{(-h/2, h/2)} }] \to -h/2, \quad  k\to \infty,
\tionl
which contradicts that $\E[ B^{0,T,-x}_{\T(x)}]=  O(h^2)$ holds uniformly in $x.$
The limit in \eqref{exit-limit} can be easily seen from the representation  $\big (B^{0,T,-x_k}_t \big)_{0 \le t \le T} \stackrel{d}{=} \big ( B^{0,T,0}_t -\frac{x_k t}{T} \big)_{0 \le t \le T}. $
For any path $t \mapsto B^{0,T,0}_t(\omega)$ one can find a sufficiently large $x_k,$ such that the transformed path  $t \mapsto B^{0,T,0}_t(\omega) -\frac{x_k t}{T} $  
exits $(-h/2, h/2)$ first at  $-h/2.$  

Nevertheless, one can prove the estimates needed in  \cite{Walsh}, where $q$ was used inside an integral over the real line.   We only discuss here  
\cite[equation (38)]{Walsh}, because the
calculations  for the other cases where the function $q$  appears are similar. For $\sigma = 1$  and denoting $\N^h_e := \set{2kh: k \in \mathbb{Z}}$ the last term 
in \cite[equation (38)]{Walsh} can be written as
\equal \label{q-integral}
&& \less \int_{-\infty}^{\infty}  \pr{2h^2 - \dist^2(x,\N^h_e)}q(x) p_T(0,x)dx  \notag\\
&= &\int_{-\infty}^{\infty}  \pr{2h^2 - \dist^2(x,\N^h_e)}  \dist(x,\N^h_o) h^{-1}   p_T(0,x)dx   \notag\\
&& + \int_{-\infty}^{\infty}  \pr{2h^2 - \dist^2(x,\N^h_e)}(q(x)- \dist(x,\N^h_o)h^{-1}  )  p_T(0,x)dx.
\tionl
The calculation   for the integral containing $\dist(x,\N^h_o)h^{-1} $ is carried out
in \cite{Walsh}. It remains to show that the other   integral behaves like $O(h^3).$ 
Since it holds  that $ \pr{2h^2 - \dist^2(x,\N^h_e)} \le 2 h^2$ and      
\[
|q(x)- \dist(x,\N^h_o)h^{-1} | = \big|\E \big[B^{0,T,-x}_{\T(x)}\big]\big| h^{-1}
\]
by \eqref{q-equals}  and \eqref{Tau-of-x}, we get the desired estimate from the next corollary.  
\begin{cor} \label{corollary}
For  $T > 0$ and  $h = \sqrt{T/n},$ there exists a $C = C(T) > 0$ such that
$$ \int_{-\infty}^{\infty} \big|\E \big[B^{0,T,-x}_{\T(x)}\big]\big| p_T(0,x) dx \leq C h^2,$$
where $\T(x)$ is  given in \eqref{Tau-of-x}.
\end{cor}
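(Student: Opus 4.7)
The plan is to reduce the pointwise quantity $|\E[B^{0,T,-x}_{\T(x)}]|$ to the mean exit time via Lemma~\ref{bridgemeanlemma}, and then integrate after splitting the real line at $|x|=h$. Fix $x\in(kh,(k+1)h)$ and set $a=kh-x$, $b=(k+1)h-x$, $y=-x$; then $a<0<b$, $y\notin(a,b)$, and $|a|\vee b\le h$, so Lemma~\ref{bridgemeanlemma} yields
\begin{align*}
\big|\E[B^{0,T,-x}_{\T(x)}]\big|\le \frac{\E_{0,T,-x}[\T(x)]}{T}\big(2h+|x|+3\sqrt{2T}\big).
\end{align*}
One cannot simply dominate $\T(x)$ by $\T_{(-h,h)}$ everywhere because, by Theorem~\ref{brown-and-bessel}\,\eqref{infty-case}, the mean $\E_{0,T,-x}[\T_{(-h,h)}]$ is infinite for $|x|<h$; this forces splitting the integration region into $|x|<h$ and $|x|\ge h$.

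For $|x|<h$, only $k\in\{-1,0\}$ arises and $y=-x$ lies exactly at the endpoint $a$ (if $x>0$) or $b$ (if $x<0$) of the exit interval. Applying Lemma~\ref{exit-time-estimate-bridge}\,\eqref{small-y} gives $\E_{0,T,-x}[\T(x)]\le 4h^2$. Inserted into the bound above and integrated against $p_T(0,x)$ over a set of Lebesgue measure $2h$, this contributes only $O(h^3)$, well within the target.

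For $|x|\ge h$, the inclusion $(kh{-}x,(k{+}1)h{-}x)\subset(-h,h)$ holds pathwise and gives $\T(x)\le\T_{(-h,h)}$. Using Theorem~\ref{representation-with-Kolm}\,\eqref{mean-y-ge-h} together with $p_T(0,-x)=p_T(0,x)$,
\begin{align*}
\E_{0,T,-x}[\T_{(-h,h)}]\,p_T(0,x)=h\int_0^T p_{T-t}(0,x)\frac{F(h/\sqrt t)}{\sqrt{2\pi t}}\,dt.
\end{align*}
The integral over $|x|\ge h$ of the full integrand is then handled by Fubini: the inner $\R$-integral $\int_\R(2h+|x|+3\sqrt{2T})p_{T-t}(0,x)\,dx$ is dominated by $2h+\sqrt{2T/\pi}+3\sqrt{2T}$, independent of $t$, so the task reduces to bounding $\int_0^T F(h/\sqrt t)/\sqrt{2\pi t}\,dt$. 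The Poisson-summation identity~\eqref{poisson-summation} rewrites the integrand as $h^{-1}\sum_{k\ge 1}\exp\!\big({-}(2k{-}1)^2\pi^2 t/(8h^2)\big)$; termwise integration on $(0,\infty)$ together with $\sum_{k\ge 1}(2k{-}1)^{-2}=\pi^2/8$ yield $\int_0^T F(h/\sqrt t)/\sqrt{2\pi t}\,dt\le h$, which furnishes the extra factor $h$ needed to reach $O(h^2)$.

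The main obstacle is this $|x|\ge h$ piece. A naive pointwise use of $F\le 1$ in \eqref{another-rep} only gives $\E_{0,T,-x}[\T_{(-h,h)}]=O(h)$, and coupled with the linear factor from Lemma~\ref{bridgemeanlemma} this would produce an $O(h)$ final bound rather than $O(h^2)$. The decisive step is therefore not to estimate $\E_{0,T,-x}[\T_{(-h,h)}]$ pointwise, but to swap the $x$- and $t$-integrals first and apply the Poisson-summation estimate $\int_0^T F(h/\sqrt t)/\sqrt{2\pi t}\,dt\le h$ globally; the missing factor of $h$ is precisely what this identity contributes.
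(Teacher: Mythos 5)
Your proposal is correct and follows essentially the same route as the paper's proof: apply Lemma \ref{bridgemeanlemma}, split at $|x|=h$, use Lemma \ref{exit-time-estimate-bridge}\,\eqref{small-y} on the inner piece, and for $|x|\ge h$ dominate $\T(x)$ by $\T_{(-h,h)}$, insert the Kolmogorov representation \eqref{another-rep}, and swap the $x$- and $t$-integrations by Fubini. Your direct evaluation $\int_0^\infty F(h/\sqrt{t})/\sqrt{2\pi t}\,dt=h$ via \eqref{poisson-summation} is just the unscaled form of the paper's appeal to the fact that $u\mapsto F(1/\sqrt{u})/\sqrt{2\pi u}$ is a probability density.
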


\begin{proof}
Since $B^{0,T,-x}_{\T(x)} \stackrel{d}{=} - B^{0,T,x}_{\T(-x)}$, it suffices to estimate the integral over $[0,\infty)$.
By \eqref{bridgemeanineq},
$$\big|\E \big[B^{0,T,-x}_{\T(x)}\big]\big| \le  \frac{\E_{0,T,-x}[ \T(x)]}{T} \left(2h + |x| + 3\sqrt{2 T}\right).  $$
If $x \in (0, h)$, then $\T(x) = \T_{(-x,h-x)}$, and estimate (\ref{tau-estimate}) gives 
\begin{align*}
\E_{0,T,-x}[\T(x)] \leq 4 x \pr{h -x+\frac{x}{2}} \leq 2h^2.
\end{align*}
For $x \ge h$ it holds  
$$\E_{0,T,-x}[\T(x)] \le C(T, h,-x) h^2$$
by Lemma \ref{exit-time-estimate-bridge}. From the above estimates we get 
\begin{align}\label{nollahoo}
\int_{0}^h \big|\E \big[B^{0,T,-x}_{\T(x)}\big]\big|\, p_T(0,x) dx
& \leq 2h^2 \int_0^h \frac{2h + x + 3\sqrt{2T}}{T} p_T(0,x) dx
\leq C(T)h^2,
\end{align}
and 
\equal \label{hooinfty}
 \int_{h}^{\infty}\!\! \big|\E \big[B^{0,T,-x}_{\T(x)}\big]\big| p_T(0,x) dx 
 &\le \!&\! h^2 \!\! \int_{h}^{\infty}\! \!C(T, h,-x)  \frac{2h + x + 3\sqrt{2T}}{T} p_T(0,x) dx\nonumber\\
& \le \!&\!  C(T) h^2 \int_{h}^{\infty}  (1+x)  C(T,h,-x) p_T(0,x) dx  \notag \\
& \le \!& \!C(T)h^2,
\tionl 
where the constant $C(T)$ varies from line to line.  
The last inequality in (\ref{hooinfty}) can be seen as follows. 
We use the representation \eqref{another-rep} for  $\eqref{C-and-meanfirstexit}$ and substitute  $t=h^2 u,$ so that
\begin{align*}
C(T,h,-x) =\E_{0,T,-x}[\T_{(-h,h)}] h^{-2}  &=h^{-1} \int_0^T \frac{ p_{T-t}(0,x)}{p_{T}(0,x)} \frac{F(h/\sqrt{t})}{\sqrt{2\pi t}} dt \\
&=  \int_0^{T/h^2} \frac{ p_{T-h^2u}(0,x)}{p_{T}(0,x)} \frac{F(1/\sqrt{u})}{\sqrt{2\pi u}} du.
\end{align*}
By Fubini's theorem it holds 
\begin{align*}
  \int_{0}^{\infty} \frac{F(1/\sqrt{u})}{\sqrt{2\pi u}} \int_{h}^{\infty} (1+x) p_{T-h^2u}(0,x)\one_{[0,T/h^2)}(u)dx du  \leq C(T),
\end{align*}
where we used for the last line that $u \mapsto \frac{F(1/\sqrt{u})}{\sqrt{2\pi u}} \one_{(0,\infty)}(u)$ is a density.
The claim then follows by (\ref{nollahoo}) and (\ref{hooinfty}).
\end{proof}
\bigskip


\bibliographystyle{plain}

\begin{thebibliography}{99}
\bibitem{BianePitmanYor} Ph. Biane, J. Pitman and M. Yor, {\it  Probability laws related to the Jacobi theta
and Riemann zeta functions, and Brownian excursions}, Bull. Amer. Math. Soc.
(N.S.), 38 (2001), 435--465.
\bibitem{Beghin} L. Beghin and E. Orsingher, {\it  On the maximum of the generalized Brownian bridge}, 
 Lith. Math. J., 39 (1999), 157--167.
\bibitem{BorodSalm} A. N. Borodin and P. Salminen, {\it Handbook of Brownian Motion - Facts and Formulae}, 
Probability and its Applications. Birkh\"auser Verlag, Basel, 2nd edition, 2nd printing, 2015.
\bibitem{ChungWalsh} K. L. Chung  and J. B. Walsh, {\it Markov Processes, Brownian Motion, and Time Symmetry}, Grundlehren der Mathematischen Wissenschaften, 249, 2nd edition, Springer, New York, 2005.
\bibitem{doob49} J. L. Doob,  {\it Heuristic approach to the Kolmogorov-Smirnov theorems}, Ann. Math. Stat., 20 (1949), 393--403.
\bibitem{Fitzsimmons} P. Fitzsimmons, J. Pitman and M. Yor, {\it Markovian bridges:~construction, Palm interpretation, 
and splicing}.  In  {\it Seminar on Stochastic Processes},  Progress in Probability 32 (1992), 101--134.
\bibitem{Gradshteyn} I. S. Gradshteyn and I. M. Ryzhik, {\it Table of Integrals, Series, and Products}, 8th edition, Elsevier/Academic Press, Amsterdam, 2014.
\bibitem{ItoMcKean} K. It\^o and H.P. McKean, {\it Diffusion Processes and their Sample Paths}, Die Grundlehren der Mathematischen Wissenschaften, 125, 2nd printing, corrected, Berlin-New York, 1974.
\bibitem{Knight} F. B. Knight, {\it Brownian local times and taboo processes}, Trans. Amer. Math. Soc. 143 (1969), 173--185. 
\bibitem{kolmogorov33}  A. N. Kolmogorov, {\it Sulla determinazione empirica delle leggi di probabilita}, Giorn. Ist.
Ital. Attuari, 4 (1933), 1--11.
\bibitem{Kroese} D. Kroese, T. Taimre and Z. Botev, {\it Handbook of Monte Carlo Methods}, Wiley Series in Probability and Statistics, John Wiley and Sons, New York, 2011.
\bibitem{Luoto} A. Luoto, {\it Time-dependent weak rate of convergence for functions of generalized bounded variation}, preprint (2017), arXiv:1609.05768v3 [math.PR].
\bibitem{PitmanYor99} J. Pitman and M. Yor, {\it The law of the maximum of a Bessel bridge}, Electr. J. Probab., 4 (1999), 1--15.  
\bibitem{Salm} P. Salminen, {\it On last exit decomposition of linear diffusions}, Studia Sci. Math. Hungar., 33 (1997), 251--262.
\bibitem{SalmYor} P. Salminen and M. Yor, {\it On hitting times of affine boundaries by reflecting
 Brownian motion and Bessel processes}, Period. Math. Hungar., 62 (2011), 75--101.
\bibitem{smirnov39} N. V. Smirnov, {\it On the estimation of the discrepancy between empirical curves of distribution for two independent samples}, 
Bul. Math. de l'Univ. de Moscou, 2 (1939), 3--14 (in
Russian).
\bibitem{Walsh} J. B. Walsh, {\it The rate of convergence of the binomial tree scheme}, Finance Stoch., 7 (2003), 337--361.
\end{thebibliography}

\end{document}